\documentclass[10pt,a4paper]{amsart}
\usepackage{amssymb,amscd}
\usepackage[all,cmtip]{xy}
\usepackage{hyperref}

\newtheorem{thm}{Theorem}[section]
\newtheorem{lem}[thm]{Lemma}
\newtheorem{prop}[thm]{Proposition}

\theoremstyle{definition}
\newtheorem{defn}[thm]{Definition}
\newtheorem{rem}[thm]{Remark}

\begin{document}

\title[The Dold-Kan correspondence and coalgebra structures]{The Dold-Kan correspondence and coalgebra structures}
\author{W. Hermann B. Sore }
\address{Departement de Mathematiques et Informatique, UFR/ST, Universite Polytechnique de Bobo-Dioulasso, 01 BP 1091 Bobo-Dioulasso 01, Burkina Faso.}
\email{hermann.sore@gmail.com}
\date{\today}
\thanks{
 The author is indebted to Birgit Richter for useful suggestions and discussions on the subject.}

\keywords{Model category, monoidal category, Dold-Kan correspondence, differential graded coalgebras}
\subjclass[2000]{Primary 18G55, 16T15; Secondary 18G30}

\begin{abstract}
By using the Dold-Kan correspondence we construct a Quillen adjunction between the model categories of \textit{non}-cocommutative coassociative simplicial and differential graded coalgebras over a field. We restrict to categories of connected coalgebras and prove a Quillen equivalence between them.
\end{abstract}

\maketitle

\section{Introduction}
In \cite{SS00}, Schwede and Shipley give sufficient conditions for lifting the model category structures of closed monoidal model categories to their categories of monoids along the free-forgetful adjoint pairs.
Then in \cite{SS03}, they consider a Quillen equivalence between the closed monoidal model categories and state conditions for obtaining a Quillen equivalence at the level of categories of monoids
as illustrated in the following diagram  
$$
\xymatrix{
 (\mbox{\textbf{C}}, \widehat\otimes, I_{\textbf C}) \ar@<+0.5ex>[d]^{\textrm{Free}} \ar@<+0.5ex>[rr]^{\simeq_{\mathrm{Quillen}}}       &       &   (\mbox{\textbf{D}}, \otimes, I_{\textbf D})  \ar@<+0.5ex>[d]^{\textrm{Free}} \ar@<+0.5ex>[ll]   \\
\mbox{\textbf{C.\textrm{Monoids}}} \ar@<+0.5ex>[u]^{\textrm{Forget}}  \ar@{.>}[rr]<+0.5ex>      &       &  \mbox{\textbf{D.\textrm{Monoids.}}}   \ar@{.>}[ll]<+0.5ex> \ar@<+0.5ex>[u]^{\textrm{Forget}}
}$$ 
In this paper we restrict to the categories of simplicial and differential non-negatively graded vector spaces. Both categories have closed monoidal model structures and are equivalent via the classical Dold-Kan correspondence. We consider their respective categories of comonoids, namely, simplicial and differential non-negatively graded coalgebras and we try to mimic dual methods from \cite{SS03} for providing the categories of comonoids with a Quillen equivalence. We prove that Dold-Kan's normalization functor descends to the level of categories of coalgebras and gives rise to an adjoint pair. Moreover, we produce model categories structures for simplicial and differential non-negatively graded coalgebras (we refer to \cite{Goe95} and to the unpublished \cite{GG99} where these model category structures have been studied at first) and observe that the adjunction obtained turns out to be a Quillen pair. 
We then restrict to the categories of simplicial and differential non-negatively graded \textit{connected} coalgebras. We derive corresponding model structures for these categories and establish a Quillen equivalence between them.

Section 2 is devoted to some generalities about the monoidal categories of differential non-negatively graded vector spaces, simplicial vector spaces and their corresponding categories of comonoids.\\
Section 3 revisits the model structure on the category of \textit{non}-cocommutative coassociative differential graded coalgebras over a field. This model category structure is due to Getzler and Goerss in their unpublished paper \cite{GG99}. We reproduce with more details most of their results and proofs. However we slightly modify their arguments for the cofibration-(acyclic fibration) axiom.\\
Section 4 shows that the normalization functor $N$ defines a functor from the category of simplicial coalgebras to the category of differential non-negatively graded coalgebras. Similar observations hold for the inverse functor $\Gamma$, however, as in \cite{SS03}, we point out that both functors are not adjoint on the level of comonoids. We then construct a functor that is right adjoint to the normalization functor $N$ on the level of comonoids. The adjoint pair of functors obtained in this way turns out to be an adjoint Quillen pair. We refer to \cite{Ric03} where similar constructions are also considered.\\
Section 5 restricts to the categories of connected simplicial coalgebras and connected differential graded coalgebras. In this section, we first investigate the completeness and cocompleteness properties of connected differential graded coalgebras. We then lift along a suitable adjunction the model structure on the category of differential graded coalgebras to the category of connected differential graded coalgebras by means of the transfer principle. In order to obtain the (acyclic cofibration)-fibration factorization axiom in a functorial way we adapt and reprove some useful techniques from \cite{Smi} in our setting. We end this section by proving a Quillen equivalence between the categories of connected differential graded and simplicial coalgebras.\\
Section 6 closes this paper with an appendix on the category of connected differential graded algebras. The results therein are intended as a step for constructing limits for connected differential graded coalgebras.

\section{Preliminaries}
\subsection{Differential graded vector spaces}
In this section we review some basic results from homological algebra. For a thorough treatment, we refer to \cite{Mac63}. 
Let \textbf{Vct} be the category of vector spa\-ces over a fixed field $K$. We denote by \textbf{DGVct} the category of differential gra\-ded $K$-vectors spaces which are concentrated in non-negative degrees and have differentials of degree $-1$. We denote by $\mathbb{S}^n$ the $n$-sphere chain complex. This is the object of \textbf{DGVct} which has the field $K$ in degree $n$ and $0$ in other degrees. All differentials in $\mathbb{S}^n$ are trivial. The $n$-disk,  denoted by $\mathbb{D}^n$, is the object of \textbf{DGVct} which has the field $K$ in degrees $n$ and $n-1$ and $0$ elsewhere. The identity on $K$ is its only non-trivial differential and we set $\mathbb D^0 = \mathbb S^0$.

Recall that if $X$ and $Y$ are two objects in the category \textbf{DGVct},
a symmetric monoidal product $\otimes$ is given by $(X\otimes Y)_n = \bigoplus_{p+q=n} X_p\otimes_K Y_q$
with differential
$d(x\otimes y) = dx\otimes y + (-1)^{\mid x\mid }x\otimes dy$.
The unit of this monoidal product is $\mathbb{S}^0$, the
differential graded vector space concentred in degree $0$, which we sometimes denote by $K[0]$.
The category \textbf{DGVct} endowed with the monoidal product $\otimes$ is closed.
Given differential graded vector spaces $(X,d_X)$ and $(Y,d_Y)$, let $\mathrm{Hom}(X,Y)$ be the object of \textbf{DGVct} with:

\begin{eqnarray*}
\mathrm{Hom}(X,Y)_0 &=& \left\{f \in \prod_{p \geq 0} \textbf{Vct}(X_p, Y_p) \mid d_Y(f_px)-f_{p-1}(d_Xx)=0, \hspace{.1cm} x \in X_p\right\} \\
\mathrm{Hom}(X,Y)_n &=& \prod_{p \geq 0} \textbf{Vct}(X_p,Y_{p+n}), \hspace{.2cm} \mbox{for} \hspace{.2cm} n\geq 1 
\end{eqnarray*}

and differential $d_H$ for any map $f = \{f_p \colon X_p \rightarrow Y_{p+n} \}_{p \geq 0}$ given by
$$(d_Hf)_p(x) = d_Y (f_p x) + (-1)^{n+1}f_{p-1}(d_X x), \quad  x \in X_p. $$
The specified right adjoint of the functor $-\otimes Y \colon \mbox{\textbf{DGVct}} \rightarrow \mbox{\textbf{DGVct}}$ is then given by the functor $\mathrm{Hom}(Y,-) \colon \mbox{\textbf{DGVct}} \rightarrow \mbox{\textbf{DGVct}}$. This right adjoint functor is given by applying the good truncation below $0$ (see \cite[1.2.7]{Wei94}) to the unbounded version of $\mathrm{Hom}(X,Y)$.

\subsection{Simplicial vector spaces}
We denote by \textbf{SVct} the category of simplicial vector spaces, that is, the category of functors
$X \colon \Delta^{\mathrm{op}} \rightarrow \textbf{Vct}$, where $\Delta$ is the category of finite ordered sets $\left[n\right] = \left\{0<1<\ldots<n\right\}$ for $n \in \mathbb N$ and whose morphisms are non-decreasing monotone functions (see for instance \cite[8.1]{Wei94}).

Let $X$ and $Y$ be two objects in \textbf{SVct}. A monoidal product $\widehat \otimes$ is given by
$(X \widehat \otimes Y)_n = X_n \otimes_K Y_n$ with coordinatewise structure maps. The unit of the monoidal product $\widehat \otimes $ is the simplicial vector space having $K$ in each degree and identity maps on $K$ as face and degeneracy operators. We denote this unit by $I(K)$. This monoidal product $\widehat \otimes $ is symmetric and  \textbf{SVct} is closed.

\subsection{Differential graded coalgebras}
 We denote by \textbf{DGcoAlg} the category of counital coassociative differential graded $K$-coalgebras. In other words \textbf{DGcoAlg} is the category of comonoids in the monoidal category (\textbf{DGVct}, $\otimes$, $K[0]$).

\subsection{Simplicial coalgebras}
The category of simplicial coalgebras, denoted by \textbf{ScoAlg}, is the category of comonoids
in the monoidal category (\textbf{SVct}, $\widehat\otimes$, $I(K)$).

\vspace{.2cm}
In the coming sections the symbols $\sqcap$ and $\sqcup$ will stand respectively for the categorical product and coproduct in the appropriate categories.

\section{Model category structures on categories of coalgebras} \label{modcatdgs}

\subsection{A model category structure on \textbf{DGcoAlg}} \label{subsection:modelcoalg}

In this section, we consider the category of \textit{non}-cocommutative coassociative, counital differential non-negatively graded coalgebras over a fixed field $K$, denoted here by \textbf{DGcoAlg}. 
We review the different arguments for proving the following result due to Getzler and Goerss in their unpublished paper \cite{GG99}: 

\begin{thm} \cite[Definition 2.3, Theorem 2.8]{GG99}.  \label{defn:model} \\
Define $f \colon C \rightarrow D \in $ \textbf{DGcoAlg} to be  
\begin{enumerate}
\item[1.] a \emph{weak equivalence} if $H_\ast f$ is an isomorphism.
\item[2.] a \emph{cofibration} if $f$ is a degreewise injection of graded vector spaces.
\item[3.] a \emph{fibration} if $f$ has the right lifting property with respect to acyclic cofibrations.
\end{enumerate}
With these definitions, \textbf{DGcoAlg} becomes a closed model category.
\end{thm}

Before proving the theorem stated above, we need to establish an analogue of the Fundamental Theorem on Coalgebras (see \cite[Theorem 2.2.1]{Swe69}) for the objects of \textbf{DGcoAlg}. Namely if $C$ is an object of \textbf{DGcoAlg} and $c \in C$ is an homogeneous element then the sub-coalgebra generated by $c$ is finite dimensional. We recall that given a graded coalgebra $C$, a graded right $C$-comodule $M$ is a graded vector space with a structure map $\omega \colon M\rightarrow M\otimes C$. For a homogeneous basis $\left\{c_i\right\}$ of $C$ the structure map is given by $\omega(m)=\sum_i m_i \otimes c_i$ under Sweedler's notation with all but finitely many of the $m_i=0$. Moreover the graded linear dual $C^\ast=\left\{\textbf{Vct}(C_n, K)\right\}$ becomes a non-positively graded algebra and $M$ inherits a left $C^\ast$-module structure by setting $f.m = \sum_i \left\langle f,c_i\right\rangle m_i$ for $f\in C^\ast$. These facts are well-known and may be found in \cite[Chapter 2]{Swe69} for the ungraded case or in \cite[Section 2.5]{Hov99} for comodule over differential graded Hopf algebras.

\begin{lem} \cite[Lemma 1.1]{GG99}. \label{comod-lem} \\
Let $\left(C, \Delta_C, \epsilon_C\right)$ be a graded coalgebra and 
$\left(M, \omega \colon M \rightarrow M \otimes C\right)$ be a right graded $C$-comodule. If $x\in M$ is a homogeneous element, then the subcomodule generated by $x$ is finite-dimensional.
\end{lem}
\begin{proof}
Let $\left\{c_i\right\}$ be a homogeneous basis for $C$. Then $\omega (x)=\sum_i x_i\otimes c_i$ with all but finitely many $x_i$ being zero. Now let $N$ denote the vector space spanned by the $x_i$. The identity $\mathrm{id}_M = (\mathrm{id}_M\otimes \epsilon_C)\circ \omega$ yields $x=\sum_i x_i.\epsilon_C(c_i)$ and hence we conclude that $x\in N$. Since $N$ is finite-dimensional it remains to check that $N$ is a subcomodule of $M$, that is, $\omega(N)\subseteq N\otimes C$. This is obtained by performing the following computations
\begin{eqnarray*}
\sum_i \omega(x_i) \otimes c_i &= &(\omega \otimes \mathrm{id}_C)\circ \omega(x) \\
                          & = & (\mathrm{id}_M \otimes \Delta_C)\circ \omega(x)\\
													&=& (\mathrm{id}_M \otimes \Delta_C)\left( \sum_j x_j\otimes c_j \right)\\
													&=& \sum_j x_j \otimes \Delta_C(c_j) \\
													&=& \sum_{j,i} x_j \otimes c_{ij} \otimes c_i 
\end{eqnarray*}
for some $c_{ij} \in C$. This yields $\omega(x_i)=\sum_j x_j\otimes c_{ij} \in N\otimes C$.
\end{proof}

\begin{lem} \cite[Lemma 1.2]{GG99}. \label{gradcoalg-lem} \\
Let $C$ be a graded coalgebra and $x\in C$ a homogeneous element. Then there is a finite dimensional sub-coalgebra $D\subseteq C$ such that $x\in D$. Moreover it can be assumed that $D_n=0$ for $n$ larger that the degree of $x$.
\end{lem}
\begin{proof}
 Since the coalgebra $C$ is a right $C$-comodule with structure map given by $\omega=\Delta_C$, the Lemma \ref{comod-lem} above supplies a finite-dimensional subcomodule with $x\in N \subseteq C$. Note that there is a comodule structure map $\omega \colon N \rightarrow N\otimes C$ defined by $\omega(n) = \sum_i n_i\otimes c_i$ for any homogeneous $n\in N$.
This comodule structure induces a left $C^\ast$-module structure on $N$ by setting $f.n=\sum_i\left\langle f,c_i\right\rangle n_i$ for all $f\in C^\ast$ and homogeneous $n\in N$.
Now let $\varphi \colon C^\ast \rightarrow \mathrm{End}_K(N)$ be the morphism induced by the above left $C^\ast$-module structure on $N$ and consider the orthogonal 
$$\left(\ker \varphi\right)^\bot =\left\{y \in C \mid \left\langle f,y\right\rangle =0 \hspace{.2cm} \forall f\in \ker \varphi\right\}.$$ 
Firstly $\left(\ker \varphi\right)^\bot$ is a sub-coalgebra of $C$ since $\ker \varphi$ is an ideal of $C^\ast$. Secondly 
$$ \dim_K\left(\ker \varphi\right)^\bot = \dim_K \left(C^\ast / \left(\ker \varphi \right)^{\bot \bot}\right) \leq \dim_K \left(C^\ast / \ker \varphi\right) .$$ 
But $\dim_K \left(C^\ast / \ker \varphi\right)$ is finite by the First Isomorphism Theorem on $\varphi$ and the fact that $N$ is finite dimensional. Hence we conclude that $\left(\ker \varphi\right)^\bot$ is finite dimensional.
Since $x\in N \subseteq \left(\ker \varphi\right)^\bot$ we may set $D= \left(\ker \varphi\right)^\bot$. Finally in order to have $D_n=0$ for $n$ greater than the degree of $x$, we just consider the sub-coalgebra of $D$ generated by the homogeneous elements of degree less than or equal to that of $x$.
\end{proof}

\begin{prop} \cite[Proposition 1.5]{GG99}. \label{FundTheo-DGcoalg} \\
Let $(C, \partial)$ be a differential graded coalgebra and $x\in C$ a homogeneous element. Then there is a finite dimensional differential graded coalgebra $D\subseteq C$ such that $x\in D$.
\end{prop}
\begin{proof}
Suppose $x \in C$ is a homogeneous element degree of degree $n$. By Lemma \ref{gradcoalg-lem} there is a finite dimensional graded sub-coalgebra denoted $D(n)$ such that $x\in D(n) \subseteq C$ and $D(n)_k =0$ for $k>n$. Then choose a basis $\left\{y_i\right\}$ for $D(n)_n$ and use again Lemma \ref{gradcoalg-lem} to produce finite dimensional sub-coalgebras $D(y_i) \subseteq C$ for each $y_i$ such that $\partial_n y_i \in D(y_i)_{n-1}$.
Then set $$D(n-1)=D(n)+ \sum_i D(y_i).$$
Thus $D(n-1)$ is a sub-coalgebra since the sum of coalgebras is again a coalgebra. Moreover $D(n-1)$ is finite dimensional. The process for obtaining $D(n-1)$ may be repeated to form an ascending sequence of sub-coalgebras
$$D(n)\subseteq D(n-1) \subseteq \cdots \subseteq D(0) \subseteq C$$
with the properties that for $0\leq k\leq n-1$
\begin{enumerate}
\item each $D(k)$ is finite dimensional,
\item $D(k-1)_l = D(k)_l$ for $l\geq k$,
\item $\partial_k \left(D(k)_k\right) \subseteq D(k-1)_{k-1}$.
\end{enumerate}
Finally setting $D=D(0)$ gives the required result.
\end{proof}

With this key result in hand we can now proceed to the proof of the model category axioms. We refer to \cite[Definition 3.3]{DS95} for the numbering of model category axioms. The coming sections will essentially focus on the (co)completeness and factorizations axioms.

\subsubsection{\textbf{Axiom MC1}} \label{axiom MC5(i)}

This section aims at proving the completeness of the category of coalgebras. The category \textbf{DGcoAlg} turns out to be anti-equivalent to a category in which colimits are easier to describe. The completeness of \textbf{DGcoAlg} is then derived from the cocompleteness of that category. \\

We denote by \textbf{ProDGAlg$_{\leq 0}$} the category of profinite non-positively graded differential algebras. An object $A$ in this category is an inverse limit $\lim_\alpha A_\alpha$ of degreewise finite dimensional graded differential algebras $A_\alpha$. By endowing the finite dimensional algebras $A_\alpha$ with the discrete topology, the object $A \in$ \textbf{ProDGAlg$_{\leq 0}$} inherits a topology where two-sided ideals of finite codimension form a neighborhood basis of $0$. Moreover, for an object $A \in$ \textbf{ProDGAlg$_{\leq 0}$}, the continuous linear dual $A'$ defined degreewise by 
$$A_n' = \left\{f \colon A_{-n} \longrightarrow K \mid \ker f \hspace{.1cm}\mbox{is open}\right\}$$
becomes an object of \textbf{DGcoAlg}, since by \cite[Proposition 6, Page 290]{Koe67} the continuous linear dual takes limits to colimits.

\begin{lem}  \cite[Proposition 1.7]{GG99}. \label{lem:antiequiv} \\
The algebraic linear dual and the continuous linear dual define an anti-equivalence of categories between \textbf{DGcoAlg} and \textbf{ProDGAlg$_{\leq 0}$}.
\end{lem}

\begin{proof}
By Proposition \ref{FundTheo-DGcoalg} any $C \in $ \textbf{DGcoAlg} can be written as a filtered colimit of its finite dimensional subcoalgebras.
It follows that $$\left(C^\ast\right)' = \left((\mathrm{colim} C_\alpha)^\ast\right)' \cong (\lim C_\alpha^\ast)' \cong \mathrm{colim} \left(C_\alpha^\ast\right)' \cong \mathrm{colim C_\alpha} = C.$$
For $A\in$ \textbf{ProDGAlg$_{\leq 0}$}, a similar argument yields $\left(A^\ast\right)' \cong A.$
\end{proof}

\begin{prop}  \cite[Proposition 1.8]{GG99}. \label{prop:dgcocomplete} \\
The category \textbf{DGcoAlg} is complete and cocomplete.
\end{prop}

\begin{proof}
The forgetful functor from \textbf{DGcoAlg} to \textbf{DGVct} creates colimits and therefore the cocompleteness of \textbf{DGcoAlg} follows immediately.
Then, we note that \textbf{ProDGAlg$_{\leq 0}$} is cocomplete. Indeed, let $A \colon I \rightarrow$ \textbf{ProDGAlg$_{\leq 0}$} be a diagram of profinite differential graded algebras. Here are the steps for defining its colimit in \textbf{ProDGAlg$_{\leq 0}$}:
\begin{enumerate}
\item form the colimit $B = \mathrm{colim}_{i \in I} A_i$ of this diagram in the category of non-positively graded differential algebras. 
\item endow $B$ with the topology where a neighborhood basis of $0$ is given by the set $\mathcal{J}$ of two-sided ideals $J$ which can be realized as the kernel of maps of algebras $B\rightarrow C$ such that $C$ is a finite dimensional differential graded algebra and such that  for each $i \in I$, the composite $A_i \rightarrow B \rightarrow C$ is continuous.
\item define the required colimits as the profinite completion $\lim_{J \in \mathcal{J}} B/J$ of $B$ with respect to this topology.
\end{enumerate}
By Proposition \ref{lem:antiequiv}, the continuous linear dual carries colimits in \textbf{ProDGAlg$_{\leq 0}$} to limits in \textbf{DGcoAlg} and therefore \textbf{DGcoAlg} is complete.
\end{proof}

\subsubsection{\textbf{Axiom MC5(i)}} In this section, we prove the cofibration-(acyclic fibration) axiom. First, we construct a functor from \textbf{DGVct} to \textbf{DGcoAlg} that is right adjoint to the forgetful functor. Then, this functor is used to provide the required factorization axiom in a functorial way. The arguments used here differ from that of \cite[Lemma 1.12, Theorem 2.1]{GG99} and refer rather to that used in the proof of \cite[Corollary 4.15]{Smi}. 

\begin{lem} \cite[Lemma 1.9]{GG99}. \label{lem:small}  \\
Let $\left\{C_\alpha\right\}_\alpha$ be a right filtered diagram in \textbf{DGcoAlg} and $D$ be a finite dimensional 
object in \textbf{DGcoAlg},
then the natural map
$$\mathrm{colim}_\alpha \textbf{DGcoAlg}\left(D, C_\alpha\right) \longrightarrow \textbf{DGcoAlg}\left(D, \mathrm{colim}_\alpha C_\alpha\right)$$
is a bijection.
\end{lem}

\begin{prop}  \cite[Proposition 1.10]{GG99}. \label{prop:rightadjoint} \\
Let $V$ be an object in \textbf{DGVct}. Then, there is a functor denoted $S_d$ from \textbf{DGVct} to \textbf{DGcoAlg} that is right adjoint to the functor $U_d$ that forgets coalgebra structure:
\begin{enumerate}
\item if $V$ is degreewise finite dimensional, $$S_d(V) = \left(\widehat{T_d(V^\ast)}\right)'$$
\item for any $V \in$ \textbf{DGVct}, $$S_d(V) = \mathrm{colim}_\alpha (S_d(V_\alpha))$$ with $V_\alpha$ running over finite dimensional subvector spaces of $V$.
\end{enumerate}
\end{prop}

\begin{proof}
  If $V$ is finite-dimensional using the following bijections with any object $D \in $ \textbf{DGcoAlg}
\begin{eqnarray*}
  \textbf{DGcoAlg} (D, (\widehat{T_d(V^\ast)})')   &   \cong   & \textbf{ProDGcoAlg}_{\leq 0} ( \widehat{T_d(V^\ast)}, D^\ast)  \\
                                             &   \cong   &  \textbf{DGAlg}_{\leq 0} ( T_d(V^{\ast}), D^\ast) \\
                                             & \cong  &     \textbf{ProDGVct}_{\leq 0} ( V^\ast, D^\ast) \\
                                            &   \cong   &  \textbf{DGVct} ( \left( D^\ast\right)', \left(V^\ast\right)') \\
																						&  \cong &  \textbf{DGVct}(D, V)
  \end{eqnarray*}
  give the desired result.
  
Now consider a general $V \in$ \textbf{DGVct}. Any object $D \in$ \textbf{DGcoAlg} can be written as the colimit $D = \mathrm{co}\!\lim_{\beta} D_\beta$ of its finite-dimensional subcoalgebras. Then, by Lemma \ref{lem:small} the following bijections
\begin{eqnarray*}
  \textbf{DGcoAlg}(D,\mathrm{colim}_\alpha (S_d(V_\alpha))) &   \cong   &   \lim_{\beta} \textbf{DGcoAlg}(D_\beta,\mathrm{colim}_\alpha (S_d(V_\alpha)))       \\
                                  &   \cong   &   \lim_{\beta} \mathrm{co}\!\lim_{\alpha} \textbf{DGcoAlg} (D_\beta,S_d(V_\alpha))      \\
                                  &   \cong   &   \lim_{\beta} \mathrm{co}\!\lim_{\alpha} \textbf{DGVct}(D_\beta,V_\alpha)         \\ 
                                  &   \cong   &   \textbf{DGVct}(D,V)
 \end{eqnarray*}
complete the proof.
\end{proof}

\begin{defn}
Let $K$ be a field and $K\left\langle x\right\rangle$ denote the vector space with basis x.
We denote by $(I,d_I)$ the \textit{unit interval}, i.e., the following differential graded vector space concentrated in degrees $1$ and $0$:
$$ \cdots 0 \rightarrow 0 \rightarrow K\left\langle a \right\rangle \stackrel{d}{\longrightarrow} K\left\langle b,c\right\rangle \rightarrow 0 \cdots $$
with $d(a) = c-b$.
Setting $\Delta (b) = b \otimes b $, $\Delta (c) = c \otimes c$ and $\Delta (a) = b \otimes a + a \otimes c $
yields a coassociative counital coalgebra structure on the differential graded vector space $(I,d_I)$. 
\end{defn}

\begin{lem}
Let $f, g \colon V \rightarrow W$ be morphisms of \textbf{DGVct}. A \textit{chain homotopy} between $f$ and $g$ is a chain map 
$H \colon V \otimes I \rightarrow W$ such that $H(v \otimes b) =f(v)$ and $H(v \otimes c) =g(v)$.
\end{lem}

\begin{proof}
For $v \in V_n$, it suffices to set $s_n(v) = (-1)^n H_{n+1}(v\otimes a)$ to recover the classical chain homotopy definition.
\end{proof}

\begin{lem} \cite[Proposition 4.10]{Smi}. \label{lem:Shomotopy} \\
Let $f, g \colon V \rightarrow W$ be chain homotopic morphisms in \textbf{DGVct}. \\
Then, $S_d(f), S_d(g) \colon S_d(V) \rightarrow S_d(W)$ in \textbf{DGcoAlg} are chain homotopic in \textbf{DGcoAlg}. 
\end{lem}

\begin{proof}
There is a homotopy $H \colon V \otimes I \rightarrow W$. Applying the cofree functor to this homotopy yields a morphism of coalgebras $S_d(H) \colon S_d(V \otimes I) \rightarrow S_d(W)$. We note that $S_d(V) \otimes I$ inherits a coalgebra structure from that of $S_d(V)$ and $I$ with a comultiplication given by the composite 
$$\tau \circ \Delta_{S_d(V)} \otimes \Delta_{I} \colon S_d(V) \otimes I \rightarrow S_d(V) \otimes S_d(V) \otimes I \otimes I \rightarrow S_d(V) \otimes I \otimes S_d(V) \otimes I$$
where $\tau$ is the switch map.
The morphism $S_d(V) \otimes I \rightarrow V \otimes I$, together with the universal property of the cofree coalgebra functor $S_d$ yields a morphism $\pi$
$$
\xymatrix{
V \otimes I & S_d(V) \otimes I \ar[l] \ar@{-->}[ld]^\pi \\
S_d(V \otimes I) \ar[u]
}
$$
Finally, the composite $S_d(H) \circ \pi \colon S_d(V) \otimes I \rightarrow S_d(V \otimes I) \rightarrow S_d(W)$ gives the required homotopy. 
\end{proof}

\begin{lem} \cite[Proposition B.17]{Smi}. \label{lem:producthomotopy} \\
Let $f \colon A \rightarrow B$ and $g \colon C \rightarrow D$ be morphisms in \textbf{DGcoAlg}. If $f$ and $g$ are homotopy equivalences, then $f \sqcap g \colon A \sqcap C \rightarrow B \sqcap D$ is a homotopy equivalence.
\end{lem}

\begin{prop}  \cite[Lemma 2.4 2.)]{GG99}. \label{prop:mc5i} \\
Let $f \colon C \rightarrow D$ be a morphism in \textbf{DGcoAlg}. 
Choose an acyclic differential graded vector space $V$ containing $C$.
Then the morphism $f$ can be factored
$$ C \stackrel{i}{\longrightarrow} D \sqcap S_d(V) \stackrel{p}{\longrightarrow} D$$
with $i$ a cofibration and $p$ an acyclic fibration.
\end{prop}

\begin{proof}
First forget the coalgebra structure on 
$C$ by considering $U_d(C) \in$ \textbf{DGVct}. Then define $V$ to be cone($U_d(C)$). The object 
$\mathrm{cone}(U_d(C)) \in$ \textbf{DGVct} is acyclic and comes with a canonical embedding $j \colon U_d(C) \rightarrow \mathrm{cone}(U_d(C))$.
 
We define $p$ to be the projection map $D \sqcap S_d(V) \rightarrow D$.
The homotopy equivalence between $\mathrm{cone} U_d(C)$ and $0$ yields a homotopy equivalence between $S_d(\mathrm{cone} U_d(C))$ and $S_d(0) = K[0]$ by Lemma \ref{lem:Shomotopy}. Then, the morphism $p \colon D \sqcap S_d(\mathrm{cone} U_d(C)) \rightarrow D \sqcap K[0] \cong D$ is a homotopy equivalence by Lemma \ref{lem:producthomotopy}. Note that we are working with bounded chain complexes over a fixed field, and hence, homotopy equivalences correspond to quasi-isomorphisms. It follows that the morphism $p$ is acyclic.

Then, to give a  morphism in  \textbf{DGcoAlg}$\left(C, D\sqcap S_d(V)\right)$
amouts to give a pair of  morphisms in  $$\textbf{DGcoAlg}(C,D) \times \textbf{DGcoAlg}(C,S_d(V)).$$
But using the adjunction between the categories of coalgebras and vector spaces, the previous product is equivalent to $$\textbf{DGcoAlg}(C,D) \times \textbf{DGVct}(U_d(C),V).$$
In this way, the pair $(f,j)$ yields a coalgebra morphism $i$. Now consider $\pi \colon D \sqcap S_d(V) \rightarrow S_d(V)$ the projection onto $S_d(V)$ and the map $\epsilon \colon S_d(V)\rightarrow V$ coming from the counit of the adjunction between coalgebras and vector spaces.
Then the following composite
$$C \stackrel{i}{\longrightarrow} D \sqcap S_d(V) \stackrel{\pi}{\longrightarrow} S_d(V) \stackrel{\epsilon}{\longrightarrow} V =\mathrm{cone}(U_d(C))$$
is the embedding $j$ and therefore insures that the coalgebra morphism $i$ is a cofibration as required.
\end{proof}

\subsubsection {\textbf{Axiom MC5(ii)}}
In this section, we prove the (acyclic cofibration)-fibration axiom. We exhibit a class of morphisms in \textbf{DGcoAlg} and use the small object argument to perform the required factorization axiom with respect to this class. 

\begin{lem}  \cite[Lemma 2.5]{GG99}. \label{lem:homogeneous} \\
Let $j \colon C \rightarrow D$ be an acyclic cofibration in \textbf{DGcoAlg} and $x \in D$ be a homogeneous element. Then there exists a subcoalgebra $B$ of $D$ such that 
\begin{enumerate}
\item[1.] $x \in B$
\item[2.] $B$ has a countable homogeneous basis,
\item[3.] $C \cap B \rightarrow B$ is an acyclic cofibration in \textbf{DGcoAlg}.
\end{enumerate}
\end{lem}

\begin{proof}
The idea of the proof is to construct a sequence $\left(B(n)\right)_{n\geq 1}$ of subcoalgebras of $D$ having the following properties:
\begin{enumerate}
\item  $B(1) \subseteq B(2) \subseteq \cdots \subseteq B(n) \cdots$,
\item each $B(n) \in$ \textbf{DGcoAlg} is finite dimensional,
\item the induced map $B(n-1)/\left[C \cap B(n-1)\right] \rightarrow B(n)/\left[C \cap B(n)\right]$ of differential graded vector spaces is zero in homology.
\end{enumerate}
First, there is a finite dimensional subcoalgebra $B(1)$ of $D$ that contains $x$.
Then, suppose that $B(n-1)$ has been constructed. Since $B(n-1)$ is finite dimensional, choose a finite set of homogeneous cycles $z_i + C \cap B(n-1) \in B(n-1)/\left[C\cap B(n-1)\right]$ so that the resulting homology classes span $H_\ast\Big(B(n-1)/\left[C\cap B(n-1)\right]\Big)$.
For each index $i$, there is a homogeneous element $x_i \in D$ which is a boundary of $z_i$ since $H_\ast\left(D/C\right)$ is zero. Then choose a finite dimensional subcoalgebra $x_i \in A(x_i) \subseteq D$ and set $B(n) = B(n-1) + \sum_i A(x_i)$.
Finally, setting $B = \bigcup_n B(n)$ satisfies the statements of the lemma. Indeed, we have
$$H_\ast (C \cap B) = H_\ast (\bigcup_n C \cap B(n) ) = \bigcup_n H_\ast(C \cap B(n)) = H_\ast C.$$
This comes from the facts that homology commutes with filtered colimits and that $H_\ast(C \cap B(n)) \cong H_\ast (B(n))$ due to the long exact sequence on homology resulting from the exact sequence $0 \rightarrow C \cap B(n) \rightarrow B(n) \rightarrow B(n)/\left[C \cap B(n)\right] \rightarrow 0.$ 
\end{proof}

\begin{lem} \cite[Lemma 2.6]{GG99}. \label{gen-triv-cof} \\
A morphism $q \colon X \rightarrow Y$ in \textbf{DGcoAlg} is a fibration if and only if it has the right lifting property with respect to all acyclic cofibrations $A \rightarrow B$ so that $B$ has a countable homogeneous basis.
\end{lem}

\begin{proof}
The first implication follows immediately from the definition of fibration given in Definition \ref{defn:model}. For the second implication, suppose that a morphism $q \colon X \rightarrow Y \in$ \textbf{DGcoAlg} has the right lifting property with respect to all acyclic cofibrations $A \rightarrow B$ with $B$ having a countable homogeneous basis. We have to prove that $q$ is a fibration, that is the lifting problem in 
$$
\xymatrix{
C \ar[r]^f  \ar[d]_i & X \ar[d]^q \\
D \ar[r] \ar@{.>}[ru]& Y
}
$$

where the morphism $i$ is an arbitrary acyclic cofibration, has a solution.
This problem is solved by using Zorn's lemma.
Let $\Omega$ be the set of pairs $(\bar{D}, g)$ where $\bar{D}$ fits into a sequence of acyclic cofibrations $C \stackrel{\subseteq}{\longrightarrow} \bar{D} \stackrel{\subseteq}{\longrightarrow} D$ in \textbf{DGcoAlg} and $g \colon \bar{D} \rightarrow X$ is a solution to the restricted lifting problem.
We order the set $\Omega$ by setting $(\bar D_1, g_1) \preceq (\bar D_2, g_2)$ if $\bar D_1 \subseteq \bar D_2$ and ${g_2}_{\mid_{\bar D_1}} = g_1$.
The poset $(\Omega, \preceq)$ is non empty since it contains $(C, f)$. Moreover, any chain in $(\Omega, \preceq)$ has an upper bound given by the union. Hence, by Zorn's lemma $(\Omega, \preceq)$ posses a maximal element $(E, g)$.
We now prove that $E=D$. Let $x$ be a homogeneous element of $D$. Since $E \stackrel{\subseteq}{\longrightarrow} D$ is an acyclic cofibration, Lemma \ref{lem:homogeneous} guarantees the existence of a subcoalgebra $B$ such that $x \in B$, $B$ has a countable homogeneous basis and $E \cap B \rightarrow B$ is an acyclic cofibration. Thus, the induced lifting problem in 
$$
\xymatrix{
E \cap B \ar[r]^{\subseteq} \ar[d] & E  \ar[r]^g & X \ar[d]^q \\
B  \ar@{.>}[rru]_l \ar[r]& D \ar[r] & Y
}
$$
has a solution by hypothesis. 
Note that the object $E+B$ is a pushout of the diagram $E \leftarrow E \cap B \rightarrow B$ in \textbf{DGcoAlg} because the forgetful functor $U_d$ creates colimits by Proposition \ref{prop:cocomplete}. Hence, the map $g$ can be extended as $\bar g \colon E + B \rightarrow X$ with $\bar g(e+b) = g(e)+l(b)$.
Moreover, the resulting Mayer-Vietoris sequence
$$\cdots \rightarrow H_n(E \cap B) \rightarrow H_n(E) \oplus H_n(B) \rightarrow H_n(E+B) \rightarrow H_{n-1}(E \cap B) \rightarrow \cdots $$
shows that $E \rightarrow E+B$ is an acyclic cofibration. Both facts imply that $(E+B, \bar g) \in (\Omega, \preceq)$. Since $(E, g)$ is a maximal element, we deduce that $E = E+B$ and therefore $x \in E$ as required.
\end{proof}

\begin{prop} \cite[Lemma 2.7]{GG99}. \label{prop:mc5ii} \\
A morphism $C \rightarrow D$ of differential graded coalgebras can be factored as $C \stackrel{i}{\longrightarrow} X \stackrel{p}{\longrightarrow} D$ where $i$ is an acyclic cofibration and $p$ a fibration. Furthermore $i$ is in the class of morphisms generated by the acyclic cofibrations $A \rightarrow B$ such that $B$ has a countable homogeneous basis.
\end{prop}

\begin{proof}
By Lemma \ref{lem:small}, any object in \textbf{DGcoAlg} is small relative to the whole category. Moreover the category \textbf{DGcoAlg} is cocomplete. Hence, in light of Lemma \ref{gen-triv-cof} the small object argument applies by using Bousfield-Smith cardinality argument. We refer to \cite[Section 11]{Bou75} where this latter argument first appeared and to the proof of \cite[Theorem 2.3.13]{Hov99} where this technique is applied to an example similar to ours.
\end{proof}

We can now sum up the various ingredients for providing \textbf{DGcoAlg} with a model category structure.

\begin{proof}[\textbf{Proof of Theorem \ref{defn:model}}]
The axiom \textbf{MC1} is given by Proposition \ref{prop:cocomplete}. The axioms \textbf{MC2} and \textbf{MC3} follow by inspection. Parts (i) and (ii) of the axiom \textbf{MC5} are proven respectively in Proposition \ref{prop:mc5i} and in Proposition \ref{prop:mc5ii}. Part (ii) of the axiom \textbf{MC4} is the definition of fibrations as stated in Theorem \ref{defn:model}. It remains to prove the part (i) of the axiom \textbf{MC4}. Let $p \colon C \rightarrow D$ be an acyclic fibration. By Proposition \ref{prop:mc5i}, factor $p$ as $C \stackrel{j}{\longrightarrow} X \stackrel{q}{\longrightarrow} D$ where $j$ is a cofibration and $q$ is an acyclic fibration with the right lifting property with respect to all cofibrations. Note that $j$ is also a weak equivalence by the axiom \textbf{MC2}. Thus the lifting problem
$$
\xymatrix{
C \ar@{=}[r] \ar[d]_j & C \ar[d]^p \\
X \ar@{.>}[ur]_l \ar[r]_q & D
}
$$
has a solution since $p$ has the right lifting property with respect to all acyclic cofibrations. Hence, the map $p$ is a retract of $q$ and any lifting problem 
$$
\xymatrix{
A \ar[r] \ar[dd]^f & C \ar[d]_j \\
                & X \ar[d]^q \ar@/_/[u]_l \\
B \ar[r] \ar@{.>}[ruu]  \ar@/_/[ru]    & D								
}
$$
with a cofibration $f$ has a solution given by the composite of the curved arrows. 
\end{proof}

We end this section with the following useful result that is a consequence of Lemma \ref{gen-triv-cof} and Bousfield-Smith cardinality argument. 

\begin{prop} \cite[Lemma 2.9]{GG99}. \label{cof-gen} \\
The category of differential graded coalgebras \textbf{DGcoAlg} is cofibrantly generated: 
\begin{enumerate}
\item The set of generating cofibrations is given by cofibrations $A\rightarrow B$ in \textbf{DGcoAlg} with $B$ finite dimensional.
\item The set of generating acyclic cofibrations is given by acyclic cofibrations $A \rightarrow B$ in \textbf{DGcoAlg} with $B$ having a countable homogeneous basis.
\end{enumerate}
\end{prop}

\subsection{A model category structure on \textbf{ScoAlg}}
The category of cocommutative simplicial coalgebras has a model structure due to Goerss in \cite[Section 3]{Goe95}. But one can adapt the arguments therein to the non-cocommutative case as well.
A map $f$ in \textbf{ScoAlg} is a
weak equivalence if $\pi_{\ast}f \cong H_\ast Nf$ is an isomorphism,
a cofibration if it is a levelwise inclusion,
and a fibration if it has the right lifting property with respect to trivial cofibrations.
Goerss' main line of argumentation remains unchanged for the case of 
non-cocommutative simplicial coalgebras.
The only slight difference is concerned with Lemma \ref{lemGoe95} recalled below.

\begin{lem} \label{lemsimplicialcoal-vect}
The forgetful functor $U_s$ from the category of simplicial coalgebras to the category of simplicial vector spaces has a right adjoint $S_s$.
\end{lem}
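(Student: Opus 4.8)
The plan is to reduce the statement to the classical existence of cofree coalgebras on vector spaces. The first step is the observation that, since the monoidal product $\widehat\otimes$ and its unit $I(K)$ are computed pointwise in the simplicial direction, a comonoid in $(\textbf{SVct},\widehat\otimes,I(K))$ is nothing but a simplicial object in the category $\textbf{coAlg}$ of counital coassociative $K$-coalgebras: there is an isomorphism of categories $\textbf{ScoAlg}\cong[\Delta^{\mathrm{op}},\textbf{coAlg}]$ under which $U_s$ becomes post-composition with the forgetful functor $U_0\colon\textbf{coAlg}\to\textbf{Vct}$. Indeed, for a simplicial vector space $X$ a comultiplication $X\to X\widehat\otimes X$ and a counit $X\to I(K)$ amount, in each degree $n$, to maps $X_n\to X_n\otimes_K X_n$ and $X_n\to K$; the comonoid axioms translate degreewise into the coalgebra axioms, and naturality in the simplicial variable says exactly that every simplicial structure map $X(\alpha)$ is a coalgebra morphism.

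The second step is to recall that $U_0$ admits a right adjoint $S_0\colon\textbf{Vct}\to\textbf{coAlg}$, the cofree counital coassociative coalgebra functor; this is classical, and is the ungraded analogue of the differential graded statement quoted above from \cite{GG99}. For a self-contained argument one may note that $\textbf{coAlg}$ is locally presentable: by the fundamental theorem of coalgebras every coalgebra is the filtered union of its finite-dimensional subcoalgebras, so $\textbf{coAlg}$ is accessible, while $U_0$ creates colimits (because $\otimes_K$ preserves colimits in each variable), so $\textbf{coAlg}$ is cocomplete; hence the cocontinuous functor $U_0$ between locally presentable categories has a right adjoint. It then remains to transport this along the isomorphism of the first step: since post-composition preserves adjunctions, $U_0\dashv S_0$ gives $(U_0)_*\dashv(S_0)_*$ between $[\Delta^{\mathrm{op}},\textbf{Vct}]$ and $[\Delta^{\mathrm{op}},\textbf{coAlg}]$, where $F_* := F\circ(-)$; under the identification $\textbf{ScoAlg}\cong[\Delta^{\mathrm{op}},\textbf{coAlg}]$ the functor $(S_0)_*$ is the sought right adjoint $S_s$, explicitly $(S_s Y)_n=S_0(Y_n)$ with structure maps $S_0(Y(\alpha))$.

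The only genuinely non-formal ingredient is the existence of the cofree coalgebra $S_0$; I expect that, and not the bookkeeping, to be the main point, and I would settle it by the local presentability argument above (or simply by citing the classical construction). Let me also emphasise that one cannot instead obtain $S_s$ by transporting the differential graded cofree-coalgebra functor $S_d$ through the Dold-Kan functors: as noted in the introduction, $N$ and $\Gamma$ are not comonoidal in a way that would induce an equivalence, or even an adjunction, between $\textbf{ScoAlg}$ and $\textbf{DGcoAlg}$.
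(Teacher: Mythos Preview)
Your proposal is correct and follows essentially the same approach as the paper: the paper's entire argument is the one-sentence remark that $S_s$ is obtained by extending degreewise the cofree coalgebra functor $S\colon\textbf{Vct}\to\textbf{coAlg}$ from \cite[Theorem 6.4.1]{Swe69}. You have simply unpacked this more carefully, making explicit the identification $\textbf{ScoAlg}\cong[\Delta^{\mathrm{op}},\textbf{coAlg}]$ and offering a local-presentability justification for the existence of $S_0$ rather than citing Sweedler directly.
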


\begin{proof}
The functor $S_s$ is obtained by extending degreewise the cofree coalgebra functor $S$ from the category of vector spaces to the category of non-cocommutative coalgebras as constructed in \cite[Theorem 6.4.1]{Swe69}.
\end{proof}

\begin{lem}\cite[Lemma 3.5]{Goe95}. \label{lemGoe95} \\
Let $f \colon C \rightarrow D$ be a morphism of coalgebras. Then, $f$ can be factored as $f= p \circ i$
$$ C \stackrel{i}{\longrightarrow} X \stackrel{p}{\longrightarrow} D$$
where $i$ is a cofibration and $p$ is an acyclic fibration.
\end{lem}

In the proof of this lemma we may replace the cocommutative cofree functor by its non-cocommutative version
 $S_s \colon \textbf{SVct} \longrightarrow \textbf{ScoAlg}$. 
In this way, Goerss' arguments transfer to the non-cocommutative setting since only the cofreeness property is required.

\section{A comparison of coalgebra categories}

\subsection{Dold-Kan functors for coalgebras}
The Dold-Kan correspondence asserts that \textbf{SVct} and \textbf{DGVct} are equivalent. 
This equivalence of categories is achieved by the normalization functor $N$ and its inverse $\Gamma$. For a fuller description of these functors, we refer to \cite[8.8.4]{Wei94}.
Moreover, it is well-known that the normalized version of the shuffle map $\nabla \colon NA \otimes NB \rightarrow N(A \widehat{\otimes} B)$ makes the normalization functor $N$ lax monoidal while the Alexander-Whitney map $AW \colon N(A \widehat{\otimes} B) \rightarrow NA \otimes NB$ makes it oplax monoidal. We refer to 
\cite[Chapter VIII, Section 8, Corollaries 8.6, 8.9]{Mac63} for a detailed description of the Alexander-Whitney and shuffle maps and their normalized versions.
From these observations we derive that the normalization functor $N$ and its inverse $\Gamma$ pass to the level of comonoids as stated in the propositions below.

\begin{prop}
If $(A,\Delta _A,\varepsilon _A)$ is a simplicial coalgebra,
then $(NA,\Delta _{NA},\varepsilon _{NA})$ is a differential graded coalgebra with a comultiplication  given by the composition
$$
\xymatrix{
NA  \ar [rr]^{N(\Delta _A)}       & &  N(A \widehat \otimes A) \ar[rr]^{AW_{A,A}}      & &  NA \otimes NA
}
$$
and counit  given by $N (\varepsilon_A)$.
\end{prop}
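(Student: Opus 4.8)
The plan is to recognize this as an instance of the general categorical fact that an \emph{oplax monoidal} (equivalently, comonoidal) functor carries comonoids to comonoids, applied to the normalization functor $N \colon \textbf{SVct} \to \textbf{DGVct}$ with the Alexander--Whitney map as its comonoidal structure. While the shuffle map $\nabla$ makes $N$ a \emph{lax} monoidal functor, the Alexander--Whitney maps $AW_{A,B} \colon N(A\widehat\otimes B) \to NA\otimes NB$ point in the direction required of a comonoidal structure: they are natural in $A$ and $B$, counital with respect to the canonical isomorphism $N(I(K)) \cong K[0] = \mathbb{S}^0$, and — crucially — coassociative, in the sense that the two composites $N(A\widehat\otimes B\widehat\otimes C) \rightrightarrows NA\otimes NB\otimes NC$ obtained by applying $AW$ twice agree. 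These are classical properties of $AW$; see \cite[8.8]{Wei94}. In fact $AW$ is strictly coassociative, so there is no associator to track and the coherence diagram is a genuine commuting square rather than a pentagon, which makes the verification below especially clean.

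Granting this, the argument is a diagram chase. Put $\Delta_{NA} := AW_{A,A}\circ N(\Delta_A)$ and, composing with the isomorphism $N(I(K))\cong K[0]$, $\varepsilon_{NA} := N(\varepsilon_A)$. That these are morphisms in $\textbf{DGVct}$ is automatic: $N(\Delta_A)$, $N(\varepsilon_A)$ and $AW_{A,A}$ are all chain maps, hence so are their composites. For \emph{coassociativity} of $\Delta_{NA}$ one assembles a commuting diagram from three ingredients: first, $N$ applied to the coassociativity identity for $\Delta_A$ in $\textbf{SVct}$; second, the naturality of $AW$ with respect to $\Delta_A$, which lets one slide $AW$ past $N(\Delta_A\widehat\otimes A)$ and past $N(A\widehat\otimes\Delta_A)$; and third, the coassociativity coherence of $AW$ itself. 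Chaining these identifies $(\Delta_{NA}\otimes \mathrm{id})\circ \Delta_{NA}$ with $(\mathrm{id}\otimes\Delta_{NA})\circ\Delta_{NA}$ as maps $NA \to NA\otimes NA\otimes NA$. The \emph{counit} axiom is analogous but shorter: from $(\varepsilon_A\widehat\otimes \mathrm{id})\circ\Delta_A = \mathrm{id} = (\mathrm{id}\widehat\otimes\varepsilon_A)\circ\Delta_A$, apply $N$, then combine naturality of $AW$ with respect to $\varepsilon_A$ with the counitality of the comonoidal structure (that $AW_{I(K),\,\cdot\,}$ and $AW_{\,\cdot\,,I(K)}$ reduce to the unit isomorphisms of $K[0]$) to obtain $(\varepsilon_{NA}\otimes\mathrm{id})\circ\Delta_{NA} = \mathrm{id} = (\mathrm{id}\otimes\varepsilon_{NA})\circ\Delta_{NA}$.

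The only real content is the coherence of the Alexander--Whitney map as a comonoidal structure on $N$, and that is standard; everything else is formal. I would therefore state explicitly (or recall from the cited literature) that $(N, AW)$ is a comonoidal functor, and then deduce the proposition essentially in one line from the principle that comonoidal functors preserve comonoids, spelling out the two diagram chases only as far as clarity demands. The one point to keep an eye on is the sign and normalization bookkeeping hidden in $AW$ when the coherence is written out in components; but since $AW$ is coassociative on the nose, the chase does not interact with signs in any essential way, so I expect no genuine obstacle.
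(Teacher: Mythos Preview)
Your proposal is correct and is essentially the approach the paper has in mind: the paper does not spell out a proof but notes that the result is dual to \cite[Section 2.3]{SS03}, where the lax monoidal structure on $N$ via the shuffle map is used to carry monoids to monoids; dualizing gives precisely your argument that the comonoidal structure $(N,AW)$ carries comonoids to comonoids. Your explicit identification of the three ingredients in the coassociativity chase (functoriality, naturality of $AW$, and coassociativity of $AW$) matches what a detailed write-up of that dualization would contain.
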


\begin{prop}
If $(B,\Delta _B ,\varepsilon _B)$ is a differential graded coalgebra, \\
then $(\Gamma B,\Delta _{\Gamma B},\varepsilon _{\Gamma B})$ is a simplicial coalgebra
with a  comultiplication $\Delta _{\Gamma B}$ given  by the following composition

$$
\xymatrix{
\Gamma B \ar[rr]^{\Gamma (\Delta_B)} \ar[ddrrrrr]_{\Delta _{\Gamma B}} &   & \Gamma (B \otimes B) \ar[rrr]^{\Gamma (\varepsilon _{B}^{-1} \otimes \varepsilon _{B}^{-1} )} & &  & \Gamma (N \Gamma B \otimes  N \Gamma B) \ar[d]^{\Gamma ( \nabla _{\Gamma B, \Gamma B})}                \\
                                                                         &   &                                                                                                 & &   & \Gamma N(\Gamma B \widehat \otimes \Gamma B) \ar[d]^{\eta _{ \Gamma B \widehat \otimes \Gamma B}^{-1}} \\
                                                                         &   &                                                                                                 & &   & \Gamma B \widehat \otimes \Gamma B
}
$$
and  counit  given by $\Gamma (\varepsilon_B).$
\end{prop}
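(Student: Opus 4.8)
The plan is to recognize $\Gamma$ as an \emph{oplax} monoidal functor and then to invoke the elementary fact that an oplax monoidal functor carries comonoids to comonoids; feeding the comonoid $(B,\Delta _B,\varepsilon _B)$ into this machine produces exactly the structure asserted in the statement. The input is the observation recalled above: the shuffle map $\nabla _{A,B}\colon NA\otimes NB\to N(A\widehat\otimes B)$, together with the canonical isomorphism $K[0]\xrightarrow{\ \cong\ }N(I(K))$, makes $N\colon\textbf{SVct}\to\textbf{DGVct}$ a \emph{lax} monoidal functor. By the Dold--Kan correspondence $N$ is an equivalence with quasi-inverse $\Gamma$; writing $\varepsilon\colon N\Gamma\xrightarrow{\ \cong\ }\mathrm{id}_{\textbf{DGVct}}$ and $\eta\colon\mathrm{id}_{\textbf{SVct}}\xrightarrow{\ \cong\ }\Gamma N$ for the structural natural isomorphisms, we may regard this as an adjoint equivalence $\Gamma\dashv N$ with unit $\varepsilon^{-1}$ and counit $\eta^{-1}$. (The symbol $\varepsilon _B$ in the displayed composite abbreviates the component at $B$ of the Dold--Kan counit $\varepsilon$, not the counit of the coalgebra $B$.) Since the left adjoint of a lax monoidal functor is canonically oplax monoidal, with comparison morphism the mate of the lax structure morphism, $\Gamma$ becomes an oplax monoidal functor $(\textbf{DGVct},\otimes,K[0])\to(\textbf{SVct},\widehat\otimes,I(K))$ whose comparison map, for $X,Y\in\textbf{DGVct}$, is
\begin{align*}
\phi _{X,Y}\colon\ \Gamma(X\otimes Y)\ &\xrightarrow{\ \Gamma(\varepsilon _X^{-1}\otimes\varepsilon _Y^{-1})\ }\ \Gamma(N\Gamma X\otimes N\Gamma Y)\ \xrightarrow{\ \Gamma(\nabla _{\Gamma X,\Gamma Y})\ }\ \Gamma N(\Gamma X\widehat\otimes\Gamma Y)\\
&\xrightarrow{\ \eta ^{-1}_{\Gamma X\widehat\otimes\Gamma Y}\ }\ \Gamma X\widehat\otimes\Gamma Y,
\end{align*}
and whose unit comparison is the canonical isomorphism $\Gamma(K[0])\cong I(K)$.

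From here the statement is just a matter of unwinding. An oplax monoidal functor $F$ sends a comonoid $(C,\delta ,\epsilon )$ to the comonoid with comultiplication $FC\xrightarrow{F\delta}F(C\otimes C)\to FC\otimes FC$ (the second arrow being the comparison map) and counit $FC\xrightarrow{F\epsilon}F(I)\to I$. Applied with $F=\Gamma$ to $(B,\Delta _B,\varepsilon _B)$, this yields a comonoid $\Gamma B$ in $(\textbf{SVct},\widehat\otimes,I(K))$ whose comultiplication is $\phi _{B,B}\circ\Gamma(\Delta _B)$, namely the displayed composite — the first arrow $\Gamma(\Delta _B)$ being $\Gamma$ applied to the comultiplication of $B$, the remaining three arrows being the three arrows of $\phi _{B,B}$ at $X=Y=B$ — and whose counit is $\Gamma B\xrightarrow{\Gamma(\varepsilon _B)}\Gamma(K[0])\cong I(K)$, that is $\Gamma(\varepsilon _B)$ under the identification $\Gamma(K[0])\cong I(K)$. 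Coassociativity and counitality of $(\Gamma B,\Delta _{\Gamma B},\varepsilon _{\Gamma B})$ then follow formally from the coassociativity and counitality of $(B,\Delta _B,\varepsilon _B)$ together with the coherence axioms for the oplax structure $\phi$ — exactly the pattern used in the preceding proposition, where $N$ is seen to carry simplicial coalgebras to differential graded coalgebras.

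Thus the only genuine work is to check that $\phi$, with identity unit comparison, really is an oplax monoidal structure on $\Gamma$: natural in $X,Y$ (immediate from naturality of $\nabla$, $\eta$ and $\varepsilon$) and satisfying the associativity and unit coherence diagrams. I would do this via the calculus of mates: the mate, across the adjunction $\Gamma\dashv N$, of a lax monoidal structure on the right adjoint $N$ is an oplax monoidal structure on the left adjoint $\Gamma$, so the coherence of $\phi$ is automatic once one grants the coherence of the shuffle map — the associativity relation $\nabla\circ(\nabla\otimes\mathrm{id})=\nabla\circ(\mathrm{id}\otimes\nabla)$ modulo the associativity constraint, together with the normalization of $\nabla$ at the unit $K[0]$ — which is precisely the content, already recalled, that $\nabla$ makes $N$ monoidal. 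I expect this to be where any real difficulty lies: carried out by bare hands, the associativity pentagon for $\phi$ unfolds into a sizeable diagram in which every occurrence of $\eta$ and $\varepsilon$ must first be absorbed using naturality and the triangle identities before the monoidality of $(N,\nabla)$ can be applied, and it is exactly this bookkeeping that the mate formulation is designed to avoid.
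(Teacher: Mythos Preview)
Your proposal is correct and is essentially the approach the paper has in mind: the paper does not spell out a proof here but defers to the dualization of \cite[Section 2.3]{SS03}, which amounts precisely to endowing $\Gamma$ with the oplax monoidal structure obtained as the mate (across $\Gamma\dashv N$) of the lax shuffle structure on $N$, and then invoking the fact that oplax monoidal functors preserve comonoids. Your comparison map $\phi_{X,Y}$ is the map the paper denotes $\psi_{X,Y}$ in the subsequent lemma, so the unwinding you describe reproduces exactly the displayed composite.
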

Notice that both propositions and their proofs are dual to results in \cite[Section 2.3]{SS03}. For detailed proofs, we refer to \cite[Section 5.1]{Sor10}.\\\\
Dually to \cite[Section 2.4]{SS03}, we point out that the coalgebra-valued functors $N$ and $\Gamma$ are not adjoint. This failure is made more precise in Remark \ref{rem:failure} below. 

\begin{lem}
The adjunction counit $\varepsilon \colon N \Gamma \longrightarrow \mathrm{Id}$ is a comonoidal transformation.
Let $\psi_{X,Y}$ be the composition of natural maps 
$$
\psi _{X,Y} = \eta ^{-1}_{\Gamma X \widehat \otimes  \Gamma Y} \circ \Gamma (\nabla _{\Gamma X,\Gamma Y}) \circ \Gamma (\varepsilon ^{-1}_{X} \otimes \varepsilon^{-1}_{Y}).
$$
Then the diagram
$$
\xymatrix{
N\Gamma (X \otimes Y) \ar[rr]^{N(\psi _{X,Y})} \ar[d]_{\varepsilon _{X \otimes Y}} &   &   N (\Gamma X \widehat \otimes  \Gamma Y) \ar[rr]^{AW_{\Gamma X, \Gamma Y}}  &  & N \Gamma X \otimes N \Gamma Y \ar[d]^{\varepsilon _X \otimes  \varepsilon _Y } \\
X \otimes Y \ar@{=}[rrrr]                    &   &                                                    &  & X \otimes Y
}
$$
commutes for every $X$,$Y$ \textit{in} \textbf{DGcoAlg}.
\end{lem}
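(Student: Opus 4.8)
The plan is to verify the two claims separately, exploiting the fact that $\varepsilon\colon N\Gamma\to\mathrm{Id}$ is the counit of the Dold–Kan adjunction $N\dashv\Gamma$ at the level of underlying vector spaces, together with the monoidal structure carried by $N$ (via $AW$) and the comonoidal structure carried by $\Gamma$ (via $\nabla$ and $\eta$). First I would recall that $\psi_{X,Y}$ is precisely the comultiplication-type map appearing in Proposition~2.?? (the one defining $\Delta_{\Gamma B}$), so that the monoidal structure map on the functor $\Gamma$ — viewed as a \emph{lax comonoidal} functor $(\mathbf{DGVct},\otimes)\to(\mathbf{SVct},\widehat\otimes)$ — is $\psi_{X,Y}\colon \Gamma(X\otimes Y)\to \Gamma X\widehat\otimes\Gamma Y$. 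The statement that $\varepsilon$ is a comonoidal transformation then amounts to the commutativity of the pentagon-like square displayed, which is a general categorical fact: if $N$ is (strong) monoidal and $\Gamma$ is its right adjoint, $\Gamma$ inherits a canonical lax comonoidal structure (the mate of the monoidal structure on $N$ under the adjunction), and the counit of the adjunction is automatically comonoidal for these structures. So my first step is to make this mate/doctrinal-adjunction argument explicit, identifying $\psi$ as the mate of $AW^{-1}$ (equivalently of $\nabla$) under $N\dashv\Gamma$.

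Concretely, I would unwind the definition of $\psi_{X,Y}$ and chase the square. The key identity to exploit is the triangle identity for the adjunction $N\dashv\Gamma$, namely $\varepsilon_{N(-)}\circ N(\eta_{-})=\mathrm{id}$ and $N(\varepsilon_{-})\circ \eta_{N(-)}=\mathrm{id}$, applied at the object $\Gamma X\widehat\otimes\Gamma Y$. Starting from the top-right path $AW_{\Gamma X,\Gamma Y}\circ N(\psi_{X,Y})$, I substitute $\psi_{X,Y}=\eta^{-1}_{\Gamma X\widehat\otimes\Gamma Y}\circ\Gamma(\nabla_{\Gamma X,\Gamma Y})\circ\Gamma(\varepsilon_X^{-1}\otimes\varepsilon_Y^{-1})$; applying $N$ and then $AW$, I use naturality of $\varepsilon$ and the monoidality of $N$ (the compatibility $AW\circ N(\nabla)=\mathrm{id}$ on normalized chains, i.e. that $AW$ and $\nabla$ are mutually inverse monoidal structure maps after normalization) to collapse the $\Gamma(\nabla)$–$AW$ pair. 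What remains is $\varepsilon_X^{-1}\otimes\varepsilon_Y^{-1}$ precomposed with $\varepsilon_X\otimes\varepsilon_Y$ after passing the counit across via naturality, and the triangle identity kills the $\eta^{-1}$ against $N$. Tracking signs and the fact that $\varepsilon_X,\varepsilon_Y$ are isomorphisms in \textbf{DGVct} (Dold–Kan is an equivalence), the composite reduces to the identity on $X\otimes Y$, which is exactly the bottom edge.

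The main obstacle I anticipate is bookkeeping rather than conceptual: one has to be careful that the Alexander–Whitney and shuffle maps are genuinely inverse \emph{only up to chain homotopy} in general, but \emph{strictly} inverse after normalization in the relevant direction ($AW\circ\nabla=\mathrm{id}$ on $N\otimes N$), and it is this strict identity — not just a homotopy — that is needed for the square to commute on the nose. I would therefore be careful to invoke $AW\circ\nabla=\mathrm{id}$ (the Eilenberg–Zilber theorem in its sharp normalized form) rather than the mere homotopy equivalence, and to check that $\psi$ is built from $\nabla$ (the shuffle) in the slot where this sharp identity applies. A secondary subtlety is that $\varepsilon_X$ need not be a coalgebra map, so the diagram is asserted only in \textbf{DGVct} (as a diagram of underlying differential graded vector spaces), which is consistent with the hypothesis ``$X,Y$ in \textbf{DGcoAlg}'' being used only to make sense of $\psi$ via the comultiplications; I would flag this so the reader does not expect a stronger statement. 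Once these points are pinned down, the verification is a direct diagram chase using naturality of $\varepsilon$ and $\eta$, the triangle identities, and $AW\circ\nabla=\mathrm{id}$; I expect no genuinely hard step beyond correctly orienting each naturality square.
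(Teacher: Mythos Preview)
Your proposal is correct and is essentially the argument the paper has in mind. The paper itself does not spell out a proof of this lemma: it is stated without proof, with the surrounding text pointing out that these statements are dual to \cite[Section~2.4]{SS03} and referring to \cite{Sor10} for details. Your chase---rewriting $N(\eta^{-1}_{\Gamma X\widehat\otimes\Gamma Y})$ as $\varepsilon_{N(\Gamma X\widehat\otimes\Gamma Y)}$ via the triangle identity, pushing $\varepsilon$ past $N\Gamma(\nabla)$ and $N\Gamma(\varepsilon_X^{-1}\otimes\varepsilon_Y^{-1})$ by naturality, and then cancelling $AW_{\Gamma X,\Gamma Y}\circ\nabla_{\Gamma X,\Gamma Y}=\mathrm{id}$---is exactly the dualization of the Schwede--Shipley computation and reduces the top composite to $(\varepsilon_X^{-1}\otimes\varepsilon_Y^{-1})\circ\varepsilon_{X\otimes Y}$, whence the square commutes.

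Two minor remarks. First, you are right to insist on the \emph{strict} identity $AW\circ\nabla=\mathrm{id}$ on $NA\otimes NB$ (not merely a homotopy); this is precisely what makes the square commute on the nose, and it is the direction that holds sharply for normalized chains. Second, your caution that the diagram lives in $\mathbf{DGVct}$ is appropriate, but your reason is slightly inverted: the lemma is precisely the step that \emph{establishes} the compatibility needed for $\varepsilon$ to respect comonoid structures (this is then recorded as the next proposition in the paper), so one should not assume that compatibility in the course of proving the lemma.
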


\begin{prop}
The functor $\Gamma \colon \mbox{\textbf{DGcoAlg}} \longrightarrow \mbox{\textbf{ScoAlg}}$
is full and faithful and respects coalgebra structures.
Moreover, the composite endofunctor $N\Gamma$ is naturally isomorphic to the identity functor on the level of comonoids categories.
\end{prop}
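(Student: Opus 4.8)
Let me interpret the statement in the evident way: $\Gamma$ is the functor $\textbf{DGcoAlg}\to\textbf{ScoAlg}$ of Proposition~3.2, and the plan is to prove its faithfulness, that it is compatible with the underlying objects, the isomorphism $N\Gamma\cong\mathrm{Id}$ on comonoids, and finally its fullness. Throughout I will use the Lemma above together with the fact that the Dold--Kan counit $\varepsilon\colon N\Gamma\to\mathrm{Id}$ and unit $\eta\colon\mathrm{Id}\to\Gamma N$ are \emph{natural isomorphisms} of underlying vector spaces.

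That $\Gamma$ respects coalgebra structures is built into Proposition~3.2: the underlying simplicial vector space of the simplicial coalgebra $\Gamma B$ is by construction $\Gamma$ applied to the underlying differential graded vector space of $B$, so the square of forgetful functors commutes, $U_s\circ\Gamma=\Gamma\circ U_d$. Faithfulness is then immediate: if $\Gamma f=\Gamma g$ then $U_s\Gamma f=U_s\Gamma g$, hence $\Gamma(U_d f)=\Gamma(U_d g)$, and since $\Gamma$ is an equivalence on vector spaces and $U_d$ is faithful, $f=g$.

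For the ``moreover'' clause, equip $N\Gamma B$ with its iterated coalgebra structure ($\Gamma B$ a simplicial coalgebra by Proposition~3.2, then $N\Gamma B$ a differential graded coalgebra by Proposition~3.1), so that $\Delta_{N\Gamma B}=AW_{\Gamma B,\Gamma B}\circ N(\psi_{B,B})\circ N\Gamma(\Delta_B)$ with $\psi$ the composite of the Lemma. I claim $\varepsilon_B\colon N\Gamma B\to B$ is a coalgebra morphism. Compatibility with comultiplications comes from pasting: $(\varepsilon_B\otimes\varepsilon_B)\circ\Delta_{N\Gamma B}=\big[(\varepsilon_B\otimes\varepsilon_B)\circ AW_{\Gamma B,\Gamma B}\circ N(\psi_{B,B})\big]\circ N\Gamma(\Delta_B)=\varepsilon_{B\otimes B}\circ N\Gamma(\Delta_B)=\Delta_B\circ\varepsilon_B$, where the middle equality is exactly the commuting square of the Lemma with $X=Y=B$, and the last is naturality of $\varepsilon$ at $\Delta_B$. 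Compatibility with counits is naturality of $\varepsilon$ at $\varepsilon_B\colon B\to K[0]$, using that $\varepsilon$ is comonoidal so $\varepsilon_{K[0]}$ is the unit identification $N\Gamma(K[0])\cong K[0]$. Since $\varepsilon_B$ is invertible on underlying vector spaces and the inverse of a coalgebra isomorphism is again a coalgebra map, $\varepsilon_B$ is an isomorphism in \textbf{DGcoAlg}, natural in $B$; hence $N\Gamma\cong\mathrm{Id}$ on comonoids.

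Finally, fullness. Given a morphism $h\colon\Gamma B\to\Gamma B'$ of simplicial coalgebras, the Dold--Kan equivalence on vector spaces produces a unique $g$ in \textbf{DGVct} from $U_dB$ to $U_dB'$ with $\Gamma(g)=U_s h$; the real work is to show $g$ is a coalgebra morphism. Writing out that $h$ commutes with the comultiplications $\Delta_{\Gamma B}=\psi_{B,B}\circ\Gamma(\Delta_B)$ and $\Delta_{\Gamma B'}=\psi_{B',B'}\circ\Gamma(\Delta_{B'})$ and invoking naturality of $\psi$ gives $\psi_{B',B'}\circ\Gamma\big((g\otimes g)\circ\Delta_B\big)=\psi_{B',B'}\circ\Gamma\big(\Delta_{B'}\circ g\big)$. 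Here $\psi_{B',B'}$ is \emph{not} invertible --- it is built from the shuffle map $\nabla$, which is only a quasi-isomorphism --- so one cannot cancel it directly; this is the main obstacle. The remedy is to apply $N$ and postcompose with $(\varepsilon_{B'}\otimes\varepsilon_{B'})\circ AW_{\Gamma B',\Gamma B'}$: by the Lemma this collapses $(\varepsilon_{B'}\otimes\varepsilon_{B'})\circ AW_{\Gamma B',\Gamma B'}\circ N(\psi_{B',B'})$ to the \emph{isomorphism} $\varepsilon_{B'\otimes B'}$, leaving $\varepsilon_{B'\otimes B'}\circ N\Gamma\big((g\otimes g)\circ\Delta_B\big)=\varepsilon_{B'\otimes B'}\circ N\Gamma\big(\Delta_{B'}\circ g\big)$; cancelling the isomorphism $\varepsilon_{B'\otimes B'}$ and using $N\Gamma\cong\mathrm{Id}$ on \textbf{DGVct} yields $(g\otimes g)\circ\Delta_B=\Delta_{B'}\circ g$. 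The counit identity for $g$ follows more directly from faithfulness of $\Gamma$ on vector spaces. Thus $g$ is a coalgebra morphism, and since $U_s$ is faithful and $\Gamma(g)$ agrees with $h$ on underlying simplicial vector spaces, $\Gamma(g)=h$ in \textbf{ScoAlg}. This establishes fullness and completes the argument.
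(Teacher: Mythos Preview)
Your argument is correct and is precisely the elaboration the paper leaves implicit: the proposition is stated without proof right after the comonoidal lemma for $\varepsilon$, with details deferred to duality with \cite{SS03} and to \cite{Sor10}, and your use of that lemma to show that $\varepsilon_B$ is a coalgebra isomorphism is exactly the intended step. For fullness you could shorten things slightly: once $\varepsilon$ is known to be a natural \emph{coalgebra} isomorphism, setting $g=\varepsilon_{B'}\circ\widetilde N(h)\circ\varepsilon_B^{-1}$ already gives a coalgebra map (as a composite of such), and $\Gamma g=h$ then follows on underlying objects from the Dold--Kan equivalence and hence in \textbf{ScoAlg} by faithfulness of $U_s$, bypassing the second computation with $\psi$ and the lemma.
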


\begin{rem} \label{rem:failure}
The unit $\eta \colon \mathrm{Id} \longrightarrow \Gamma N$ does not have good comonoidal properties.
More precisely, there are objects $X$ and $Y$ in the category \textbf{ScoAlg} so that the diagram
$$
\xymatrix{
X \widehat \otimes Y  \ar@{=}[rrrr]    \ar[d]_{\eta _{X \widehat \otimes Y}}   &    &     &   &   X \widehat \otimes Y  \ar[d]^{\eta _{X} \widehat \otimes \eta _{Y} }  \\
\Gamma N (X \widehat \otimes Y) \ar[rr]_{\Gamma (AW_{X,Y})}     &    &     \Gamma (NX \otimes NY) \ar[rr]_{\psi _{NX,NY}} &     & \Gamma NX \widehat \otimes \Gamma NY
}
$$
does not commute.
Indeed, consider for example $X = Y = \Gamma (\textbf{Z}[1])$ as in \cite[Remark 2.14]{SS03}.	
Since $N$ is left inverse to $\Gamma $ by the previous proposition, one has
$$NX = NY = N \Gamma (\textbf{Z}[1]) \cong \textbf{Z}[1] \hspace{.2cm} \mathrm{and} \hspace{.2cm} NX \otimes NY \cong  \textbf{Z}[1] \otimes \textbf{Z}[1] = \textbf{Z}[2].$$
Therefore the lower composite map in the previous diagram vanishes in degree 1 since
$$[\Gamma (NX \otimes NY)]_1 = [\Gamma (\textbf{Z}[2])]_1 = 0.$$
But in degree 1,  the right map $\eta _X \widehat \otimes \eta _Y$ is an isomorphism between free abelian groups of rank one since
$$[\Gamma (NY)]_1 \cong  [Y]_1 \cong \textbf{Z} \cong  [X]_1 \cong  [\Gamma (NX)]_1.  $$
\end{rem}

\subsection{Quillen adjunctions for coalgebras}
In this section we consider Quillen's setting of model categories and we investigate whether the coalgebra-valued functors $N$ and $\Gamma$ fit into this framework. In addition to the model structures on the categories of coalgebras proven in Section \ref{modcatdgs} we recall the model category structures of the categories of vector spaces involved in this work.
\begin{enumerate}

\item The category \textbf{DGVct} has a model category structure (see \cite[Chapter I, Example B]{Qui67}). A map $f$ in the category \textbf{DGVct} is a weak equivalence if $H_{\ast }f$ is an isomorphism, a cofibration if for each $n \geq 0$, $f_n$ is injective,
and a fibration if for each $n \geq 1$, $f_n$ is surjective. In \cite[Section 7.18]{DS95} it is proven that \textbf{DGVct} is cofibrantly generated. The generating acyclic cofibrations are given by $\big\{0 \rightarrow \mathbb{D}^n \mid n \geq 1 \big\}$ and the generating cofibrations by $\big\{\mathbb{S}^{n-1} \rightarrow \mathbb{D}^n \mid n \geq 0 \big\}$ with $\mathbb S^{-1}$ being the zero chain complex. \\

\item The category \textbf{SVct} has a model category structure (see \cite[II.4, II.6]{Qui67}). A map $f$ in the category \textbf{SVct} is a weak equivalence if $\pi_{\ast}f$ is an isomorphism and a fibration if it is a fibration in the underlying category of simplicial sets.
Since \textbf{DGVct} is cofibrantly generated, one can deduce 
that \textbf{SVct} is cofibrantly generated by applying the transfer result by Crans in \cite[Section 3]{Cra95} to the adjoint pair $(\Gamma, N)$ provided by the Dold-Kan correspondence. Hence in the category \textbf{SVct}, the generating acyclic cofibrations are given by $\big\{0 \rightarrow \Gamma(\mathbb{D}^n) \mid n \geq 1 \big\}$ and the generating cofibrations by
 $\big\{ \Gamma(\mathbb{S}^{n-1}) \rightarrow \Gamma(\mathbb{D}^n) \mid n\geq 0 \big\}$. 
\end{enumerate}

Our aim now is to compare \textbf{ScoAlg} and \textbf{DGcoAlg} in terms of Quillen adjunctions. 
Recall that the forgetful functor $U_d$ from the category of differential graded coalgebras to the category of differential
graded vector spaces has a right adjoint $S_d$ as proven in Proposition \ref{prop:rightadjoint}. The counterpart result for the categories of simplicial coalgebras and vector spaces is given by Lemma \ref{lemsimplicialcoal-vect}.
In this way, the situation to be studied may be illustrated in the diagram
$$
\xymatrix{
 (\mbox{\textbf{SVct}}, \widehat\otimes, I(K)) \ar@<+0.5ex>[d]^{S_s} \ar@<+0.5ex>[rr]^{N }       &       &   (\mbox{\textbf{DGVct}}, \otimes, K[0])  \ar@<+0.5ex>[d]^{S_d} \ar@<+0.5ex>[ll]^{\Gamma}   \\
\mbox{\textbf{ScoAlg}} \ar@<+0.5ex>[u]^{U_s}  \ar@<+0.5ex>[rr]^{\widetilde N}      &       &  \mbox{\textbf{DGcoAlg}.}  \ar@<+0.5ex>[u]^{U_d}
}$$

where $\widetilde N$ stands for the coalgebra-valued normalization functor.

\begin{prop}
In the above situation the functor $\widetilde N$ has a right adjoint $R$.
Moreover the adjoint pair $(\widetilde N, R)$ is a Quillen pair.
\end{prop}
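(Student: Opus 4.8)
The plan is to dualise, step by step, the construction of the functor $L$ in \cite[Section~3]{SS03}. From the diagram above we have $U_d\widetilde N = NU_s$ on underlying objects, $N$ has the right adjoint $\Gamma$, and the forgetful functors $U_s$, $U_d$ have the cofree coalgebra functors $S_s$, $S_d$ as right adjoints. First I would identify the value of the sought right adjoint on cofree differential graded coalgebras: for $V$ in \textbf{DGVct} and $D$ in \textbf{ScoAlg} there is a chain of isomorphisms, natural in $D$,
\begin{align*}
\textbf{DGcoAlg}(\widetilde N D, S_d V)
&\cong \textbf{DGVct}(U_d\widetilde N D, V) = \textbf{DGVct}(NU_s D, V) \\
&\cong \textbf{SVct}(U_s D, \Gamma V) \cong \textbf{ScoAlg}(D, S_s\Gamma V),
\end{align*}
using $U_d \dashv S_d$, the identity $U_d\widetilde N = NU_s$, the Dold--Kan adjunction $\Gamma \dashv N$, and $U_s \dashv S_s$. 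Hence $S_s(\Gamma V)$ represents $D \mapsto \textbf{DGcoAlg}(\widetilde N D, S_d V)$, so on cofree objects one is forced to set $R^{\mathrm{com}}(S_d V) = S_s(\Gamma V)$.

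To pass to an arbitrary differential graded coalgebra $C$, I would use that $C$ is the equalizer of a canonical coreflexive pair of cofree coalgebras
\[
C \longrightarrow S_d U_d C \rightrightarrows S_d U_d S_d U_d C
\]
coming from the adjunction $U_d \dashv S_d$; this rests on the comonadicity of $U_d$ (it has a right adjoint $S_d$, reflects isomorphisms, and creates $U_d$-split equalizers). Since \textbf{ScoAlg} is complete I can then set
\[
R^{\mathrm{com}}(C) := \mathrm{eq}\bigl( S_s\Gamma U_d C \rightrightarrows S_s\Gamma U_d S_d U_d C \bigr),
\]
the parallel pair being obtained by transporting the one above through the cofree-case correspondence. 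Applying $\textbf{ScoAlg}(D,-)$ to this equalizer, invoking the cofree case, and using that $\textbf{DGcoAlg}(\widetilde N D,-)$ turns the presentation of $C$ back into an equalizer, yields a natural isomorphism $\textbf{ScoAlg}(D, R^{\mathrm{com}} C) \cong \textbf{DGcoAlg}(\widetilde N D, C)$; the Yoneda lemma then makes $C \mapsto R^{\mathrm{com}} C$ a functor and gives $\widetilde N \dashv R^{\mathrm{com}}$. (Alternatively, once $U_s$ and $U_d$ are known to be comonadic, the existence of $R^{\mathrm{com}}$ drops out of the dual adjoint lifting theorem applied to the square of forgetful functors.)

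For the second assertion it is enough to check that $\widetilde N$ is left Quillen. Cofibrations in \textbf{ScoAlg} and in \textbf{DGcoAlg} are the levelwise, resp.\ degreewise, injections; since $U_d\widetilde N f = N(U_s f)$ and normalisation carries injections to injections, $\widetilde N$ preserves cofibrations. Weak equivalences in the two categories are detected, respectively, by $H_*N U_s(-)$ and $H_*U_d(-)$, and the identity $U_d\widetilde N = N U_s$ identifies these, so $\widetilde N$ preserves (and reflects) weak equivalences, hence also acyclic cofibrations. Therefore $(\widetilde N, R^{\mathrm{com}})$ is a Quillen pair.

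The main obstacle I foresee is the middle step: presenting an arbitrary differential graded coalgebra as a coreflexive equalizer of cofree ones --- i.e.\ proving $U_d$ (and $U_s$) comonadic and checking the relevant completeness of \textbf{ScoAlg} --- together with the care needed to verify that the parallel pair defining $R^{\mathrm{com}}(C)$ corresponds to the canonical one on the differential graded side, so that the natural isomorphism, and with it the Yoneda argument, is legitimate. Everything else is a formal transcription of \cite[Section~3]{SS03}.
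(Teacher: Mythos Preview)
Your proposal is correct and follows essentially the same route as the paper: define $R^{\mathrm{com}}$ on cofree coalgebras via the chain of adjunctions, extend to arbitrary $C$ by writing $C$ as the equalizer of the canonical pair $S_dU_dC \rightrightarrows S_dU_dS_dU_dC$ and taking the corresponding equalizer in \textbf{ScoAlg}, then verify left-Quillenness of $\widetilde N$ via $U_d\widetilde N = NU_s$. Your write-up is in fact more careful than the paper's (you make the Yoneda step and the comonadicity issue explicit, and you observe that $\widetilde N$ even reflects weak equivalences), but there is no substantive difference in strategy; one small slip is that the adjunction you invoke in the displayed chain is $N \dashv \Gamma$, not $\Gamma \dashv N$ as you label it, though of course both hold since the Dold--Kan correspondence is an equivalence.
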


\begin{proof}
First, let $V$ be a differential graded vector space and $S_d(V)$ its differential graded cofree coalgebra. We set
$$R(S_d(V)) = S_s \Gamma (V).$$
Indeed, considering the various adjoint pairs $(U_s,S_s)$, $(N, \Gamma)$, $(U_d, S_d)$ and the \- identity $NU_s = U_d\widetilde N$ successively, yields the following bijection
$$\textbf{ScoAlg} \Big(X, RS_dV \Big) \cong \textbf{DGcoAlg} \Big(\widetilde{N}X, S_dV \Big).$$
This means that the functor $R$ is right adjoint to $\widetilde{N}$ for cofree coalgebras.
We now notice that the adjunction $(U_d,S_d)$ defines a monad $S_dU_d$ on the category \textbf{DGcoAlg}.
Thus, if $C$ is a differential graded coalgebra, it can be writtten as the equalizer of the diagram
$$
\xymatrix{
S_dU_d C \ar@<0.5ex>[r]^<<<<<<{d^0} \ar@<-0.5ex>[r]_<<<<<<{d^1} &  S_dU_d S_dU_d C
}
$$
Since the functor $R$ should be a right adjoint it has to preserve limits. Therefore, defining $R(C)$ as  the equalizer of the maps $R(d^0)$ and $R(d^1)$ yields the desired right adjoint.

Finally we observe that the cofibrations and acyclic cofibrations in \textbf{SVct} and \textbf{DGVct} match with those of their respective categories of comonoids \textbf{ScoAlg} and \textbf{DGcoAlg}. Since the functor $N$ is a left Quillen functor the identity $NU_s = U_d\widetilde N$ ensures that the functor $\widetilde N$ is a left Quillen functor.
\end{proof}

\begin{rem}
We mention that instead of $\widetilde N$, we could consider the coalgebra-valued functor $\widetilde \Gamma$. By similar techniques, it is possible to construct a right adjoint functor $R$ to $\widetilde \Gamma$ and show that $(\widetilde \Gamma, R)$ is a Quillen pair.
\end{rem}

\section{Categories of connected coalgebras}
\subsection{Connected differential graded coalgebras} \label{sect:model}
In this section we restrict to \textit{connected} differential graded objects. An object $V$ in \textbf{DGVct} is connected if $V_0 =0$ and we denote by \textbf{DGVct}$_\mathrm{c}$ the full category of connected differential vector spaces. With mild changes, the category \textbf{DGVct}$_\mathrm{c}$ inherits a model category structure from \textbf{DGVct}. Indeed, \textbf{DGVct}$_\mathrm{c}$ has limits and colimits constructed degreewise as in \textbf{DGVct}. Moreover, the model category factorization axioms may be performed as in \cite[Section 1.3]{GS07}, but discarding this time objects such as the $0$-sphere $\mathbb{S}^0$, $\mathbb D^{0}$ and $\mathbb{D}^1$. In this way, a map $f$ in the category \textbf{DGVct}$_\mathrm{c}$ is a weak equivalence if $H_{\ast }f$ is an isomorphism, a cofibration if for each $n \geq 1$, $f_n$ is injective,
and a fibration if for each $n \geq 2$, $f_n$ is surjective. \\
An object $C$ in \textbf{DGcoAlg} is connected if $C_0 =K$. We denote by \textbf{DGcoAlg}$_\mathrm{c}$ the category of connected differential graded coalgebras and in the rest of this section we discuss its model category structure. 

Let $V$ be an object in the category \textbf{DGVct}$_\mathrm{c}$. The tensor coalgebra on $V$ is defined by $T'_d(V) = \bigoplus_{n\geq 0} V^{\otimes n}$. Since $V_0 =0$, it follows that $T'_d(V) \cong \prod_{n\geq 0} V^{\otimes n}$. We mention that the structure maps on $T'_d(V)$ are given by 
\begin{eqnarray*}
 \Delta_{T'_d(V)}(v_1 \otimes \cdots \otimes v_n) & = &\sum_{r=0}^{n} (v_1 \otimes \cdots \otimes v_r) \otimes (v_{r+1} \otimes \cdots \otimes v_n)\\
 \Delta_{T'_d(V)}(1)                              & = & 1\otimes 1 \\
 \epsilon_{T'_d(V)}(v_1 \otimes \cdots \otimes v_n)&=& 0 \hspace{.15cm} \mbox{for $n \geq 1$} \hspace{.15cm} \mbox{and} \hspace{.15cm} \epsilon_{T'_d(V)}(1)=  1.
 \end{eqnarray*}

\begin{defn} \cite[Section II.2]{HMS74}.
Let $C$ be a connected differential graded coalgebra.
Then, the functor $I'_d \colon \textbf{DGcoAlg}_\mathrm{c} \rightarrow \textbf{DGVct}_\mathrm{c}$ is defined by 
$$I'_d(C) = C / K[0].$$
In other words, the differential graded vector space $I'_d(C)_n$ is $ C_n$ for $n \geq 1$, and $0$ for $n=0$.
\end{defn}

\begin{prop} \cite[Section II.2]{HMS74}.
The tensor coalgebra functor $$T'_d \colon \textbf{DGVct}_\mathrm{c} \rightarrow \textbf{DGcoAlg}_\mathrm{c}$$ is right adjoint to the functor $I'_d$.
\end{prop}

\subsection{A model category structure for $\textbf{DGcoAlg}_\mathrm{c}$} \label{sect:modelconnected}
This section aims at transferring the model category structure of \textbf{DGcoAlg} to \textbf{DGcoalg}$_c$ by means of the transfer principle as stated for instance in \cite[Sections 2.5, 2.6]{BM03}. 
To this end, Proposition \ref{coalg-adjunction} below exhibits an adjunction between the categories \textbf{DGcoAlg} and \textbf{DGcoalg}$_c$. 
Then Proposition \ref{connectedcocomplete} establishes the completeness and cocompleteness of the category \textbf{DGcoalg}$_c$. 

The rest of the section is intended to construct a functorial path-object for fibrant objects in \textbf{DGcoalg}$_c$. But more is done here through a construction due to \cite[Definition 4.17, Lemma 4.19]{Smi}: we explain how to achieve the acyclic cofibration-fibration factorization axiom in a functorial way for any morphism in \textbf{DGcoalg}$_c$. Indeed, given a morphism $f\colon C\rightarrow D$ in \textbf{DGcoAlg}$_c$, the Lemma \ref{lem:acyclic} below provides a functorial way to factorize $f$ as $C\stackrel{i}{\longrightarrow} X \stackrel{p}{\longrightarrow} D$ with $i$ a cofibration and $p$ a fibration. Then Smith's construction in Section \ref{smithconstruction} builds out of $X$ an object $G$ that is weak equivalent to $C$ proving this way the required axiom \textbf{MC5(ii)}. We close this section with an important Lemma \ref{lem:retract}, useful for proving our main result Theorem \ref{mainresult}.

\begin{prop} \label{coalg-adjunction}
There is an adjunction between the categories $\textbf{DGcoAlg}$ and $\textbf{DGcoAlg}_\mathrm{c}$.
\end{prop}

\begin{proof}
Let $C \in$ \textbf{DGcoAlg} with counit $\epsilon_C \colon C \rightarrow K[0]$. Since $(\epsilon_C)_0 \colon C_0 \rightarrow K$ is a non-zero linear form, it follows that $C_0 \cong \ker (\epsilon_C)_0 \oplus K$. Moreover, the vector space $\ker (\epsilon_C)_0$ is a coideal of the coalgebra $C_0$ by \cite[Theorem 1.4.7]{Swe69}, hence
$C_0 / \ker (\epsilon_C)_0$ has a coalgebra structure and we may define a functor $$F \colon \textbf{DGcoAlg} \rightarrow \textbf{DGcoAlg}_\mathrm{c} \hspace{.1cm} \mbox{by} \hspace{.1cm}
F(C) = C/\left(\ker (\epsilon_C)_0\right)[0]$$ where $\left(\ker (\epsilon_C)_0\right)[0]$ denotes the object of \textbf{DGVct} with $\ker (\epsilon_C)_0$ concentrated in degree $0$. Let $(C, D) \in \textbf{DGcoAlg} \times \textbf{DGcoAlg}_\mathrm{c}$ and $f \colon C \rightarrow D$ a morphism in \textbf{DGcoAlg}. Since $(\epsilon_D)_0 \colon K \rightarrow K$ is an isomorphism and $(\epsilon_D)_0 f_0 = (\epsilon_C)_0$, the following universal problem 
$$
\xymatrix{
C\ar[r]^f \ar[d]_\pi & D \\
F(C) = C/\left(\ker (\epsilon_C)_0\right)[0] \ar@{.>}[ur]
} 
$$
 has a solution. In other words the functor $F$ is left adjoint to $U \colon \textbf{DGcoAlg}_\mathrm{c} \rightarrow \textbf{DGcoAlg}$ that forgets connectedness.
\end{proof}

\begin{prop} \label{connectedcocomplete}
The category of connected differential graded coalgebras is complete and cocomplete.
\end{prop}

\begin{proof}
We first start with limits. A terminal object in \textbf{DGcoAlg}$_\mathrm{c}$ is given by $K[0]$. 
For other limits, we recall that there is an anti-equivalence between the category of differential graded coalgebras and the category of profinite differential graded algebras. Since by the appendix in Section \ref{section:con-alg}, the category of connected differential graded algebras is complete and cocomplete, we derive limits for \textbf{DGcoAlg}$_\mathrm{c}$ by applying the steps given in Proposition \ref{prop:dgcocomplete}. We mention that the usual tensor product $\otimes$ of differential graded coalgebras is \emph{not} the categorical product in \textbf{DGcoAlg}$_\mathrm{c}$ since we do not assume cocommutativity.\\
Regarding colimits, we refer to \cite[Section 1]{Nei78} where similar constructions appear for cocommutative coalgebras. Since as a left adjoint, the functor $I'_d$ must preserve initial objects, we deduce that $K[0]$ is initial in \textbf{DGcoAlg}$_\mathrm{c}$.
Let $f,g \colon C \rightarrow D$ be two maps in \textbf{DGcoAlg}$_\mathrm{c}$. Then, their coequalizer is given by $D / \textrm{im}(f-g).$ This quotient is constructed degreewise and each of its homogeneous parts is in fact a coalgebra by \cite[Proposition 1.4.8]{Swe69}. Finally, if $C$ and $D$ are two objects in \textbf{DGcoAlg}$_\mathrm{c}$, we may form the maps 
$$
K[0] \stackrel{\varphi_C}{\longrightarrow} C \stackrel{i_C}{\longrightarrow} C \oplus D \hspace{.2cm} \textrm{and} \hspace{.2cm} K[0] \stackrel{\varphi_D}{\longrightarrow} D \stackrel{i_D}{\longrightarrow} C \oplus D.
$$
The coproduct of $C$ and $D$ in \textbf{DGcoAlg}$_\mathrm{c}$ is thus given by 
$$C \sqcup D = \left(C \oplus D \right)/ \textrm{im}\left(i_C \circ \varphi_C - i_D \circ \varphi_D \right).$$
Notice that the direct sum is the coproduct of the underlying differential graded vector spaces and that the quotient guarantees the required connectedness condition.
\end{proof}

In order to find a functorial path-object for fibrant objects in \textbf{DGcoAlg}$_\mathrm{c}$, we explain in this paragraph how to achieve a functorial acyclic cofibration-fibration for a given morphism $f \colon C \rightarrow D$ in \textbf{DGcoAlg}$_\mathrm{c}$. This is done by using a technique described in \cite[Definition 4.17, Lemma 4.19]{Smi}. Proceeding this way offers a dual advantage, namely Proposition \ref{modelcatconnected} and Lemma \ref{lem:retract}.

\begin{lem} \label{lem:acyclic}
Let $V$ be an acyclic ($H_\ast V \cong 0$) connected differential graded vector space and $C$ a connected differential graded coalgebra. Then the projection $$C \sqcap T'_d(V)\longrightarrow C$$ is an acyclic fibration.
\end{lem}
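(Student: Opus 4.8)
The plan is to exhibit the projection as a base change of a single acyclic fibration over the terminal object. Since $K[0]$ is terminal in \textbf{DGcoAlg}$_\mathrm{c}$, the categorical product $C \sqcap T'_d(V)$ is canonically the pullback of the canonical map $T'_d(V) \to K[0]$ along the counit $C \to K[0]$, and the projection $C \sqcap T'_d(V) \to C$ is precisely the base change of $T'_d(V) \to K[0]$ along that map. Because acyclic fibrations in any model category are stable under pullback, it suffices to prove that $T'_d(V) \to K[0]$ is an acyclic fibration in \textbf{DGcoAlg}$_\mathrm{c}$; note that this reduction needs no hypothesis on $C$.

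For the weak equivalence I would compute the homology of $T'_d(V)$. Its underlying differential graded vector space is $\bigoplus_{n \geq 0} V^{\otimes n}$ with the tensor-product differential on each summand, and by connectedness of $V$ this sum is finite in each degree. Hence $H_\ast(T'_d(V)) \cong \bigoplus_{n \geq 0} H_\ast(V^{\otimes n})$, and since we work over a field the Künneth theorem gives $H_\ast(V^{\otimes n}) \cong H_\ast(V)^{\otimes n}$. As $V$ is acyclic this vanishes for $n \geq 1$, while the $n = 0$ term is $V^{\otimes 0} = K[0]$; thus $H_\ast(T'_d(V)) \cong K[0]$ and the canonical map $T'_d(V) \to K[0]$ is a homology isomorphism.

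For the fibration property I would use the adjunction $I'_d \dashv T'_d$. Since $K[0] = T'_d(0)$ is terminal, the map $T'_d(V) \to K[0]$ is $T'_d$ applied to the terminal map $V \to 0$ in \textbf{DGVct}$_\mathrm{c}$, so for a trivial cofibration $j \colon A \to B$ in \textbf{DGcoAlg}$_\mathrm{c}$ a lifting problem of $j$ against $T'_d(V) \to K[0]$ transposes under the adjunction to a lifting problem of $I'_d(j) \colon I'_d(A) \to I'_d(B)$ against $V \to 0$ in \textbf{DGVct}$_\mathrm{c}$. Now $I'_d$ carries cofibrations to cofibrations, since a degreewise injection of connected coalgebras is in particular injective in every degree $\geq 1$; and $I'_d$ preserves weak equivalences, because for a connected differential graded coalgebra $C$ the differential $C_1 \to C_0$ vanishes (the counit is a chain map to $K[0]$ and an isomorphism in degree $0$), whence $H_0(C) = K$ automatically and $H_n(C) = H_n(I'_d(C))$ for $n \geq 1$. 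Thus $I'_d(j)$ is again a trivial cofibration, while $V \to 0$ is a fibration in \textbf{DGVct}$_\mathrm{c}$ (being surjective in every degree), so the required lift exists; therefore $T'_d(V) \to K[0]$ is a fibration.

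Combining the two parts, $T'_d(V) \to K[0]$ is an acyclic fibration, and pulling back along $C \to K[0]$ gives the statement. I expect the fibration step to be the genuine point: everything there reduces to checking that $I'_d$ sends trivial cofibrations to trivial cofibrations, which in turn rests on the small but indispensable observation that a connected differential graded coalgebra has vanishing differential into degree zero. The Künneth collapse of $H_\ast(T'_d(V))$ and the formal stability of acyclic fibrations under pullback are then routine.
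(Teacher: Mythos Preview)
Your fibration argument is fine, and in fact the paper's is essentially the same: the lifting square against $C\sqcap T'_d(V)\to C$ is adjoined, via the product and then $(I'_d,T'_d)$, to a lifting square of $I'_d(j)$ against $V\to 0$ in \textbf{DGVct}$_\mathrm{c}$. Your observation that the differential $C_1\to C_0$ vanishes (so that $I'_d$ preserves weak equivalences) is correct and nicely justified. Passing first to $T'_d(V)\to K[0]$ and then pulling back is an unnecessary detour, but harmless for this half.

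The gap is in the weak-equivalence half. You invoke ``acyclic fibrations in any model category are stable under pullback'' to transport the quasi-isomorphism $T'_d(V)\to K[0]$ to $C\sqcap T'_d(V)\to C$. But this lemma is one of the ingredients the paper uses to \emph{establish} the model structure on \textbf{DGcoAlg}$_\mathrm{c}$ (it feeds into the cofibration/acyclic-fibration factorization), so you cannot yet appeal to model-category facts about this category. Stability of fibrations under pullback is pure category theory (right lifting properties are preserved), which is why your fibration half survives; but the identification of acyclic fibrations with the class having RLP against all cofibrations, and hence their pullback stability, needs the factorization and retract axioms you are in the process of proving. Without that, the pullback of a weak equivalence need not be a weak equivalence, and you have not computed $H_\ast(C\sqcap T'_d(V))$ for general $C$.

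The paper handles this by a direct computation: it dualizes, using the anti-equivalence with profinite algebras, so that $[C\sqcap T'_d(V)]^\ast\cong C^\ast\sqcup T_d(V^\ast)\cong T_{C^\ast}(C^\ast\otimes V^\ast\otimes C^\ast)$, and then K\"unneth and universal coefficients give $H_\ast(C\sqcap T'_d(V))\cong H_\ast(C)$. This is exactly the step your pullback shortcut was meant to replace; you will need something like it (or some other independent computation of the homology of the product) to close the argument.
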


\begin{proof}
First note that if $C$ is an object of \textbf{DGcoAlg}$_c$ then its linear dual $C^\ast=\left\{\textbf{Vct}(C_n, K)\right\}$ becomes a connected differential non-positively graded algebra. 
We also denote by $T_{C^\ast}$ the tensor $C^\ast$-algebra functor defined by 
$$T_{C^\ast}(M) = C^\ast \oplus \bigoplus_{n\geq 1} \underbrace{M\otimes_{C^\ast} M \otimes_{C^\ast} \cdots \otimes_{C^\ast} M}_{n \; \mathrm{times}}$$ 
for any differential graded $C^\ast$-bimodule $M$. 
With this in hand we obtain
$$\left[C \sqcap T'_d (V)\right]^\ast \cong C^\ast \sqcup \left[T'_d(V)\right]^\ast \cong C^\ast \sqcup T_d (V^\ast) \cong T_{C^\ast}\left(C^\ast \otimes V^\ast \otimes C^\ast \right).$$
The cohomological K\"unneth formula yields
$$H^\ast \left(T_{C^\ast}\left(C^\ast \otimes V^\ast \otimes C^\ast \right)\right) \cong H^\ast \left(C^\ast\right).$$
The required homology isomorphism $H_\ast \left(C \sqcap T'_d(V)\right) \cong H_\ast C$ results from the application of the cohomological universal coefficient theorem.\\
We still have to prove that each projection is a fibration. To do so, we consider the diagram 
$$
\xymatrix{
A \ar[r] \ar[d]_i & C \sqcap T'_d(V) \ar[d] \\
B \ar[r]          & C
}
$$
where $i$ is a cofibration in \textbf{DGcoAlg}$_\mathrm{c}$. By adjointness, this amounts to considering the diagram 
$$
\xymatrix{
I'_d(A) \ar[r] \ar[d]_{I'_d(i)} & V \ar[d] \\
I'_d(B) \ar[r]          & 0
}
$$
in \textbf{DGVct}$_\mathrm{c}$. Since $I'_d(i)$ is a cofibration  and $V \rightarrow 0$ is an acyclic fibration in the model category of \textbf{DGVct}$_\mathrm{c}$, a lift $I'_d(B) \rightarrow V$ exists.
\end{proof}

We recall from \cite[Section 1.5]{Wei94} some useful constructions in \textbf{DGVct}.

\begin{defn} Let $(V,d)$ be an object in the category \textbf{DGVct}.
\begin{itemize}
\item[1.] We denote by $\mathrm{cone}(V)$ the object of \textbf{DGVct} with 
$\left(\mathrm{cone}(V)\right)_n = V_{n-1} \oplus V_n$ and differential given by $d(b, c) = (-d(b), d(c)-b), \hspace{.2cm} b \in V_{n-1}, c \in V_n.$
\item[2.] We denote by $s^{-1}V$ the object of \textbf{DGVct} with
$\left(s^{-1} V\right)_n = V_{n+1} $ and differential given by $-dc, \hspace{.2cm} c \in V_{n+1}.$
\item[3.] We denote by $\tau_{\geq 1} V$ the object of \textbf{DGVct} with
$$
\left(\tau_{\geq 1} V\right)_n =
\left\{\begin{array}{ccl}
0, & \mbox{if $n < 1$,} \\
\ker{(V_1 \rightarrow V_0)}, & \mbox{if $n =1$,}    \\
V_n, & \mbox{if $n > 1$.} 
\end{array}
\right.
$$ 
\end{itemize}
\end{defn}

\begin{rem}
If $V \in$ \textbf{DGVct}$_\mathrm{c}$, then $\mathrm{cone}(V) \in$ \textbf{DGVct}$_\mathrm{c}$. This is not the case for the desuspension $s^{-1}V$ since $\left(s^{-1}V\right)_0 =V_1$. We may avoid this problem by considering the object $\tau_{\geq 1} \left(s^{-1} V\right)$ which lies in \textbf{DGVct}$_\mathrm{c}$.
\end{rem}

\begin{lem} \label{lem:fibration}
Let $V$ be an object of the category \textbf{DGVct}$_\mathrm{c}$. Then, the canonical map $\tau_{\geq 1} \left(s^{-1} \mathrm{cone}(V)\right) \longrightarrow \tau_{\geq 1} \left(V\right) =V$ is a fibration in the model structure of \textbf{DGVct}$_\mathrm{c}$.
\end{lem}

\begin{proof}
Recall from Section \ref{sect:model} that fibrations in \textbf{DGVct}$_\mathrm{c}$ are maps $f$ with $f_n$ surjective for $n \geq 2.$  We may picture our case as follows
$$
\xymatrix{
\cdots \ar[r] & V_2 \oplus V_3 \ar[r] \ar[d] & \ker{\left(V_1 \oplus V_2 \rightarrow V_1\right)} \ar[r] \ar[d] & 0 \ar[d] \\
\cdots \ar[r] & V_2            \ar[r]        & V_1 = \ker{\left(V_1 \rightarrow 0\right)}                                  \ar[r]        & 0.
}
$$
Our given map is a fibration since, for $n\geq 2$, we have canonical projections.
\end{proof}

\begin{lem} \label{lem:isom}
If $C$ is an object of the category \textbf{DGcoAlg}$_\mathrm{c}$, then, $$H_\ast (C) \cong H_\ast (I'_d C) \oplus H_\ast (K[0]).$$
\end{lem}

\begin{proof}
Since the object $K[0]$ is both initial and terminal in the category \textbf{DGcoAlg}$_\mathrm{c}$, we deduce that $K[0]$ splits off the object $C$. Hence $C \cong I'_d(C) \oplus K[0]$ and the result follows. 
\end{proof} 
Furthermore, if $C$ and $D$ are object in \textbf{DGcoAlg}$_\mathrm{c}$ such that $H_\ast (I'_d C) \cong H_\ast (I'_d D)$, then, $H_\ast (C) \cong H_\ast (D)$.

\vspace{.5cm}

\subsection*{Smith's construction} \label{smithconstruction} Let us factor $f$ as $C \stackrel{i_0}{\longrightarrow} D \sqcap T'_d(I'_d C) \stackrel{p_0}{\longrightarrow} D$ where the map $i_0$ is a canonical injection hence a cofibration in \textbf{DGcoAlg}$_\mathrm{c}$ and $p_0$ is a fibration. 
Then, construct the maps $\left(i_n\right)_{n\geq 1}$ and $\left(p_n\right)_{n\geq 1}$ as displayed in the diagram
$$
\xymatrix{
C \ar[rr]|-{i_0} \ar@/_/[drr]|-{i_1} \ar@/_/[ddrr]|-{i_n} \ar@/_/[dddrr]|-{i_{n+1}} \ar@/_/[ddddrr]|-{i_\infty}&    &  G(0) = D \sqcap T'_d(I'_d C) \ar[r]|-{p_0} &   D \\
          &    &  G(1) \ar[u]_{p_1}  &     \\
          &    &  G(n) \ar@{--}[u]  &     \\
          &    &  G(n+1) \ar[u]_{p_{n+1}}  &     \\
          &    &  \lim_n G(n) \ar@{--}[u]  &     \\
}
$$
in which
\begin{enumerate}
\item for $n\geq 0$, the object $G(n+1)$ is a pullback of $$G(n) \longrightarrow T'_d \Big[\tau_{\geq 1} \left(I'_d H(n)\right)\Big] \longleftarrow T'_d \Big[\tau_{\geq 1}\left(s^{-1}\mathrm{cone}\left(I'_d H(n)\right)\right)\Big]$$
where $H(n)$ is a pushout of $G(n) \longleftarrow C \stackrel{\epsilon_C}{\longrightarrow} K[0]$ 
\item the maps $\left(p_n\right)_{n\geq 1}$ form a tower of fibrations. In fact, as pullbacks, they are built out of \textbf{DGVct}$_\mathrm{c}$ fibrations $\tau_{\geq 1}\left(s^{-1}\mathrm{cone}\left(I'_d H(n)\right)\right) \longrightarrow \tau_{\geq 1}\left(I'_d H(n)\right)$ as shown in Lemma \ref{lem:fibration}.
\item the maps $\left(i_n\right)_{n\geq 1}$ are cofibrations since by induction the compositions $p_n \circ i_n$ are injections.
\end{enumerate}

The following result is analogue to \cite[Lemma 4.19]{Smi}.

\begin{lem}
The resulting object $\lim_n G(n) \in $ \textbf{DGcoAlg}$_\mathrm{c}$ described above is weakly equivalent to $C$.
\end{lem}

\begin{proof}
Following \cite[Proof of Lemma 4.19]{Smi} the idea is to factorize the identity map on $\left(\lim_n G(n) \right) / i_\infty \left(C\right)$ through the acyclic object  $T'_d \left[ \tau_{\geq 1} s^{-1} \mathrm{cone} \lim_n I'_d H(n)\right]$. To this end, we observe the following category-theoretic facts. We will denote by $A\times_C B$ a pullback of the diagram $A\rightarrow C \leftarrow B$.

\begin{enumerate} 
\item As a pushout of the diagram $\lim_n G(n) \stackrel{i_\infty}{\longleftarrow} C \longrightarrow K[0]$, we have that $$\lim_n H(n) \cong \left(\lim_n G(n) \right) / i_\infty \left(C\right) = \lim_n \left( G(n)  / i_n \left(C\right) \right).$$
\item By construction of $G(n)$ in the preceding paragraph, we have
$$
\lim_n G(n) = \lim_n G(n) \times_{T'_d \Big[\tau_{\geq 1} \left(I'_d H(n)\right)\Big]} T'_d \Big[\tau_{\geq 1}\left(s^{-1}\mathrm{cone}\left(I'_d H(n)\right)\right)\Big].
$$
\item Let us consider the canonical projection 
$$ \lim_n G(n) \times_{T'_d \Big[\tau_{\geq 1} \left(I'_d H(n)\right)\Big]} T'_d \Big[\tau_{\geq 1}\left(s^{-1}\mathrm{cone}\left(I'_d H(n)\right)\right)\Big] \longrightarrow \lim_n G(n).$$ 
Since $\tau_{\geq 1}\left(s^{-1}\mathrm{cone}\left(I'_d H(n)\right)\right) \longrightarrow \tau_{\geq 1}\left(I'_d H(n)\right)$ is a fibration in 
\textbf{DGVct}$_\mathrm{c}$, there is a map such that the composite 
$$\tau_{\geq 1}\left(I'_d H(n)\right) \longrightarrow \tau_{\geq 1}\left(s^{-1}\mathrm{cone}\left(I'_d H(n)\right)\right) \longrightarrow \tau_{\geq 1}\left(I'_d H(n)\right)$$ is the identity. Applying the cofree functor $T'_d$ to this map yields a map that splits the canonical projection given above.
\end{enumerate}

Consequently, we obtain a map $$\lim_n G(n) \longrightarrow \lim_n G(n) \times_{T'_d \Big[\tau_{\geq 1} \left(I'_d H(n)\right)\Big]} T'_d \Big[\tau_{\geq 1}\left(s^{-1}\mathrm{cone}\left(I'_d H(n)\right)\right)\Big]$$ that splits the projection map in (3).
Furthermore, we may produce the diagram
$$
\xymatrix{
\left(\lim_n G(n)\right) / i_\infty (C) \ar[d]   \\
\left( \lim_n G(n) \times_{T'_d \Big[\tau_{\geq 1} \left(I'_d H(n)\right)\Big]} T'_d \Big[\tau_{\geq 1}\left(s^{-1}\mathrm{cone}\left(I'_d H(n)\right)\right)\Big]\right) / i_\infty (C)\ar[d] \\
\left(\lim_n G(n)\right) / i_\infty (C) \times_{ T'_d \Big[\tau_{\geq 1} \left(I'_d H(n)\right)\Big]} T'_d \Big[\tau_{\geq 1}\left(s^{-1}\mathrm{cone}\left(I'_d H(n)\right)\right)\Big]  \ar@{=}[d] \\
\lim_n \left(G(n) / i_n (C)\right) \times_{ T'_d \Big[\tau_{\geq 1} \left(I'_d H(n)\right)\Big]} T'_d \Big[\tau_{\geq 1}\left(s^{-1}\mathrm{cone}\left(I'_d H(n)\right)\right)\Big]   \ar@{=}[d] \\
\lim_n H(n) \times_{ T'_d \Big[\tau_{\geq 1} \left(I'_d H(n)\right)\Big]} T'_d \Big[\tau_{\geq 1}\left(s^{-1}\mathrm{cone}\left(I'_d H(n)\right)\right)\Big]  \ar[d] \\
\lim_n H(n) = \left(\lim_n G(n)\right) / i_\infty (C)
}
$$
that factorizes $\left(\lim_n G(n) \right) / i_\infty \left(C\right) \stackrel{\mathrm{Id}}{\longrightarrow} \left(\lim_n G(n) \right) / i_\infty \left(C\right)$. 
Since 
$$
\begin{array}{l}
\lim_n H(n) \times_{ T'_d \Big[\tau_{\geq 1} \left(I'_d H(n)\right)\Big]} T'_d \Big[\tau_{\geq 1}\left(s^{-1}\mathrm{cone}\left(I'_d H(n)\right)\right)\Big]   \\
\subseteq \lim_n T'_d\left(\lim_n \Big[\tau_{\geq 1} \left(I'_d H(n)\right)\Big]\right) \times_{ T'_d \Big[\tau_{\geq 1} \left(I'_d H(n)\right)\Big]} T'_d \Big[\tau_{\geq 1}\left(s^{-1}\mathrm{cone}\left(I'_d H(n)\right)\right)\Big]  \\
= T'_d \Big[\tau_{\geq 1}\left(s^{-1}\mathrm{cone}\left(I'_d H(n)\right)\right)\Big]
\end{array}
$$
we deduce that the identity map $\left(\lim_n G(n) \right) / i_\infty \left(C\right) \longrightarrow \left(\lim_n G(n) \right) / i_\infty \left(C\right)$ may be factorised through $T'_d \left[ \tau_{\geq 1} s^{-1} \mathrm{cone} \lim_n I'_d H(n)\right]$. This latter object is acyclic because $\tau_{\geq 1} s^{-1} \mathrm{cone} \lim_n I'_d H(n)$ is canonically acyclic and homology commutes with the functor $T'_d$ by K\"unneth theorem. It follows that $H_\ast \Big[I'_d \left(\left(\lim_n G(n) \right) / i_\infty \left(C\right)\right) \Big]=0.$ Finally, the long exact sequence of homology resulting from the short exact sequence 
$$0 \longrightarrow I'_d C \longrightarrow I'_d\left(\lim_n G(n)\right) \longrightarrow I'_d\left(\left(\lim_n G(n) \right) / i_\infty \left(C\right)\right)\longrightarrow 0$$ gives the required weak equivalence by using Lemma \ref{lem:isom}.
\end{proof}

\begin{prop} \label{prop:simth}
Any morphism $f \colon C \rightarrow D$ in \textbf{DGcoAlg}$_\mathrm{c}$ can be factored as $$C \stackrel{i}{\longrightarrow} G \stackrel{p}{\longrightarrow} D$$ with $i$ an acyclic cofibration and $p$ a fibration.
\end{prop}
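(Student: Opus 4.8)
## Proof Proposal

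The plan is to mimic the structure of the previous proposition, but with the roles reversed: now I need the middle object $G$ to receive an \emph{acyclic cofibration} from $C$ and to admit a fibration to $D$. The natural candidate is a mapping-path-object style construction. First I would build a path object $P(D)$ for the connected coalgebra $D$, that is, an object together with a weak equivalence $D \to P(D)$ (ideally a cofibration, which here just means a degreewise injection) and a fibration $P(D) \to D \sqcap D$ (or at least two compatible fibrations $P(D) \to D$). A convenient model: since cones of acyclic objects are available and $\mathrm{cone}(I'_d(D))$ is acyclic, set $P(D) = D \sqcap T'_d\big(\mathrm{cone}(I'_d(D))\big)$, exactly the object appearing in the acyclic-fibration factorization of the identity of $D$; by Lemma~\ref{lem:acyclic} the projection $P(D) \to D$ is an acyclic fibration, and the diagonal $D \to P(D)$ (induced from $\mathrm{id}_D$ and the canonical $D \to I'_d(D) \to \mathrm{cone}(I'_d(D))$ composed into the tensor coalgebra) is a section, hence a degreewise injection and, being a section of a weak equivalence, itself a weak equivalence — an acyclic cofibration.

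Next I would form $G$ as the pullback in $\textbf{DGcoAlg}_\mathrm{c}$ of $P(D) \to D$ along $f \colon C \to D$. Concretely $G = C \sqcap_D P(D)$, which exists by the completeness half of Proposition~(the one asserting $\textbf{DGcoAlg}_\mathrm{c}$ is complete and cocomplete). The map $p \colon G \to D$ is the composite $G \to C \xrightarrow{f} D$, or equivalently $G \to P(D) \to D$; I need $p$ to be a fibration. The map $i \colon C \to G$ is induced by $\mathrm{id}_C$ and the composite $C \to D \to P(D)$; since the latter is the acyclic cofibration $D \to P(D)$ pulled back along $f$, and acyclic cofibrations in this model structure are the degreewise-injective weak equivalences, the base change $i$ should again be a degreewise injection, and one checks it is a weak equivalence because $G \to C$ is a pullback of the acyclic fibration $P(D) \to D$ (using that acyclic fibrations are stable under pullback, and the two-out-of-three property). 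The factorization $f = p \circ i$ then holds by construction.

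The main obstacle is verifying that $p \colon G \to D$ is genuinely a \emph{fibration}, since fibrations here are defined only by the right lifting property against acyclic cofibrations and are not stable under pullback in general for an arbitrary model category. I would handle this by the usual adjointness trick used already in Lemma~\ref{lem:acyclic}: translate a lifting problem for $p$ against an acyclic cofibration $A \to B$ in $\textbf{DGcoAlg}_\mathrm{c}$, via the adjunction $(I'_d, T'_d)$ and the pullback description of $G$, into a lifting problem in $\textbf{DGVct}_\mathrm{c}$ against $I'_d(A) \to I'_d(B)$ — which is an acyclic cofibration there — on the one hand, and against the acyclic fibration $\mathrm{cone}(I'_d(D)) \to 0$ on the other; a lift exists because in $\textbf{DGVct}_\mathrm{c}$ the model axioms do hold and acyclic cofibrations lift against (acyclic) fibrations. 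One subtlety to watch: the pullback of coalgebras is not computed on underlying vector spaces in the naive way because $\sqcap$ is not the vector-space product, so the adjunction computation must be done carefully using the explicit $\sqcap$ from \cite[Proof of Lemma 1.12]{GG99}, paralleling the isomorphism chain in Lemma~\ref{lem:acyclic}. Once the lifting property is established, $i$ acyclic cofibration and $p$ fibration follow, completing the proof.
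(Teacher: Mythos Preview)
Your construction has a genuine gap: the map $p$ you define is \emph{not} a fibration in general, and the adjointness trick cannot rescue it. Unwinding your pullback, one finds $G \cong C \sqcap T'_d\bigl(\mathrm{cone}(I'_d(D))\bigr)$ and the map $p \colon G \to D$ is literally $f$ precomposed with the projection $G \to C$. Now take a lifting problem for $p$ against an acyclic cofibration $A \hookrightarrow B$: a lift $B \to G$ is, by the pullback description, a pair consisting of a map $B \to C$ and a map $B \to T'_d\bigl(\mathrm{cone}(I'_d(D))\bigr)$, compatible over $D$. The second component can indeed be produced via the $(I'_d, T'_d)$ adjunction as you suggest, but the first component demands a map $B \to C$ extending $A \to C$ and satisfying $f \circ (B \to C) = (B \to D)$ --- that is precisely a lift of the original square against $f$ itself. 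Since $f$ is arbitrary, no such lift need exist. The underlying conceptual error is that your ``path object'' $P(D)$ carries only \emph{one} evaluation map to $D$; a genuine path-object factorization of the diagonal would supply two distinct evaluations $ev_0, ev_1$, so that one pulls back along $ev_0$ and projects via $ev_1$, and then $p$ does not factor through $f$.

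The paper proceeds by an entirely different mechanism: it builds an infinite tower. One first factors $f$ as $C \xrightarrow{i_0} G(0) = D \sqcap T'_d(I'_d C) \xrightarrow{p_0} D$ with $i_0$ a cofibration and $p_0$ a fibration; then one iteratively corrects by forming, at each stage $n$, the cofibre $H(n)$ of $C \to G(n)$, and defines $G(n+1)$ as the pullback of $G(n) \to T'_d(I'_d H(n)) \leftarrow T'_d\bigl(s^{-1}\mathrm{cone}(I'_d H(n))\bigr)$. The transition maps $p_{n+1} \colon G(n+1) \to G(n)$ are fibrations (they arise, via adjunction, from the fibrations $s^{-1}\mathrm{cone}(I'_d H(n)) \twoheadrightarrow I'_d H(n)$ in $\textbf{DGVct}_\mathrm{c}$), and one sets $G = \lim_n G(n)$, $i = i_\infty$, $p = p_0 \circ p_1 \circ \cdots$. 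Acyclicity of $i_\infty$ is obtained by showing, following Smith, that $G / i_\infty(C)$ is acyclic. No single-step path-object construction suffices here; the infinite iteration is essential precisely because fibrations are only known through their lifting property and one must force acyclicity of the cofibre in the limit.
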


\begin{proof}
Setting $i= i_\infty$ and taking $p$ to be induced by the $p_i$'s give the required factorization.
\end{proof}

\begin{prop} \label{modelcatconnected}
The category of connected differential graded coalgebras has a model category structure. A morphism $f \in \textbf{DGcoAlg}_\mathrm{c}$ is a weak equivalence (resp. a fibration) if $U(f) =f$ is a weak equivalence (resp. a fibration) in \textbf{DGcoAlg}.
\end{prop}

\begin{proof}
By Proposition \ref{connectedcocomplete}, the category \textbf{DGcoAlg}$_\mathrm{c}$ is (co)complete. Moreover, Proposition \ref{prop:simth} 
endows \textbf{DGcoAlg}$_\mathrm{c}$ with a fibrant replacement functor and a functorial path-object $X \stackrel{i}{\longrightarrow} \mathrm{Path}(X) \stackrel{p}{\longrightarrow} X \sqcap X$ for any fibrant object $X \in$ \textbf{DGcoAlg}$_\mathrm{c}$. Finally, since the category \textbf{DGcoAlg} is cofibrantly generated by 
Proposition \ref{cof-gen}, the transfer principle applies and \textbf{DGcoAlg}$_\mathrm{c}$ inherits a model category structure.
\end{proof}

\begin{lem} \label{lem:retract}
Let $C$ be a fibrant connected differential graded coalgebra and  $$ C \stackrel{i}{\longrightarrow} G \stackrel{p}{\longrightarrow} K[0]$$ be the factorization of $\epsilon_C \colon C \rightarrow K[0]$ as in Proposition \ref{prop:simth}.
Then, $C$ is a retract of $G$. Moreover, $G$ is a cofree connected differential graded coalgebra.
\end{lem}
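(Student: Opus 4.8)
The plan is to treat the two assertions separately. The retract statement is a one-line lifting argument; the cofreeness of $G$ requires unwinding the tower that produced the factorization in the previous proposition.

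For the retract, observe that $i\colon C\to G$ is an acyclic cofibration by the previous proposition, and that since $C$ is fibrant its structure map $\epsilon_C\colon C\to K[0]$ is a fibration. A lift $r$ therefore exists in the commutative square
$$
\xymatrix{
C \ar@{=}[r] \ar[d]_{i} & C \ar[d]^{\epsilon_C} \\
G \ar[r]_{p} \ar@{.>}[ur]^{r} & K[0]
}
$$
and $r\circ i=\mathrm{id}_C$ exhibits $C$ as a retract of $G$ in \textbf{DGcoAlg}$_\mathrm{c}$.

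For the cofreeness, I would run the construction of the factorization of $\epsilon_C=p\circ i$ with target $D=K[0]$. Since $K[0]$ is terminal, $I'_d(K[0])=0$, so the bottom of the tower is already the cofree coalgebra $G(0)=K[0]\sqcap T'_d(I'_d C)\cong T'_d(I'_d C)$. I would then show by induction on $n$ that each $G(n)$ is cofree, $G(n)\cong T'_d(W_n)$, and that each transition $G(n+1)\to G(n)$ is $T'_d$ applied to a map $W_{n+1}\to W_n$ in \textbf{DGVct}$_\mathrm{c}$. Since $T'_d$ is a right adjoint it preserves the sequential limit, so $G=\lim_n G(n)\cong T'_d\big(\lim_n W_n\big)$ would be cofree. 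The inductive step reduces to identifying the cospan $G(n)\to T'_d(I'_d H(n))\leftarrow T'_d\big(s^{-1}\mathrm{cone}(I'_d H(n))\big)$ whose pullback is $G(n+1)$ with the image under $T'_d$ of a cospan in \textbf{DGVct}$_\mathrm{c}$; as $T'_d$ preserves pullbacks, $W_{n+1}$ is then the corresponding pullback. The leg coming from the cone is visibly $T'_d$ of the canonical fibration $s^{-1}\mathrm{cone}(I'_d H(n))\to I'_d H(n)$. The other leg is the quotient $q_n\colon G(n)\to H(n)$ onto the cofiber of $i_n\colon C\to G(n)$ (a cofiber because $K[0]$ is a zero object) followed by the unit of $I'_d\dashv T'_d$; under the adjunction this map corresponds to $I'_d(q_n)\colon I'_d T'_d(W_n)\to I'_d H(n)$, and the point to verify is that it vanishes on all tensors of length $\geq 2$, i.e. factors through the projection $I'_d T'_d(W_n)\to W_n$.

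This last point is the main obstacle: it demands keeping track, along the whole tower, of how the coideal generated by $i_n(C)$ interacts with the tensor-length filtration of $T'_d(W_n)$, which is exactly the kind of bookkeeping performed in \cite[Definition 4.14, Lemma 4.16]{Smi}. An alternative I would consider is to dualize the entire tower through the anti-equivalence between \textbf{DGcoAlg}$_\mathrm{c}$ and connected differential graded algebras of \cite[Proposition 1.7]{GG99}, together with the appendix of Section \ref{section:con-alg}: the tower of pullbacks of cofree coalgebras turns into a sequential colimit of pushouts of free tensor algebras along free maps, so that ``$G$ cofree'' becomes ``$G^{\ast}$ free,'' which can then be read off directly on the algebra side.
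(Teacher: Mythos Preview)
Your retract argument is identical to the paper's. Your cofreeness argument also follows the paper's strategy---show $G(0)$ is cofree, induct up the tower, and use that $T'_d$ preserves limits---but you are considerably more careful than the paper is. The paper's entire inductive step reads: ``As a right adjoint functor, $T'_d$ commutes with inverse limits. Hence, we may recursively deduce that for $n\geq1$ the objects $G(n)$ are cofree in \textbf{DGcoAlg}$_\mathrm{c}$ and finally that $G = \lim_n G(n)$ is cofree as required.''

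You correctly flag the gap in this reasoning: that $T'_d$ preserves limits only tells you that a pullback of a cospan \emph{in the image of $T'_d$} is cofree, not that a pullback of a cospan whose vertices happen to be cofree is cofree. The leg $G(n)\to T'_d(I'_d H(n))$ is the adjoint of $I'_d(q_n)$, and one genuinely needs to know that it is $T'_d$ of a map $W_n\to I'_d H(n)$, which is exactly the factorization through the length-one summand you isolate. The paper does not address this; your two proposed routes (the tensor-length bookkeeping following \cite{Smi}, or dualizing to the algebra side via \cite{GG99} and the appendix) are both sensible completions. So your proposal is not a different approach so much as a more honest version of the paper's own argument, with the missing verification made explicit.
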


\begin{proof}
Since $C$ is fibrant, the counit $\epsilon_C$ is a fibration. It follows that a lift exists in the diagram
$$
\xymatrix{
C \ar@{=}[r] \ar[d]_i & C \ar[d]^{\epsilon_C}\\
G \ar[r]_p \ar@{.>}[ru]& K[0] 
}
$$
and therefore that $C$ is a retract of $G$.\\
For the second statement, note first that $G(0)$ is cofree in \textbf{DGcoAlg}$_\mathrm{c}$ since $$G(0) = T'_d\left(I'_d C\right) \sqcap K[0] = T'_d\left(I'_d C \right) \sqcap T'_d\left(0\right) \cong T'_d\left(I'_d C \times 0\right) \cong T'_d\left(I'_d C\right).$$
Then, the first step of Proposition \ref{prop:simth} computes the object $H(0)$ as a pushout
$$
\xymatrix{
C \ar[r] \ar[d] &  K[0] \ar[d] \\
G(0)\cong T'_d\left(I'_d C\right) \ar[r] & H(0)
}
$$
The second step gives $G(1)$ as a pullback in the diagram 
$$
\xymatrix{
G(1) \ar[rr] \ar[d] & &  \ar[d] T'_d \Big[\tau_{\geq 1}\left(s^{-1}\mathrm{cone}\left(I'_d H(0)\right)\right)\Big]\\
G(0)\cong T'_d\Big[I'_d C\Big] \ar[rr] && T'_d \Big[\tau_{\geq 1} \left(I'_d H(0)\right)\Big]
}
$$
Then, the map $C \rightarrow H(0)$ yields the map $I'_d C \rightarrow  I'_d H(0) = \tau_{\geq 1} (I'_d H(0))$. By the universal property of the functor $T'_d$, we deduce that the map
$$G(0)\cong T'_d\Big[I'_d C\Big] \longrightarrow T'_d \Big[\tau_{\geq 1} \left(I'_d H(0)\right)\Big]$$ 
is of the form
$T'_d \Big[I'_d C \longrightarrow \tau_{\geq 1} \left(I'_d H(0)\right)\Big].$
Hence, the object $G(1)$ is cofree as a pullback of cofree objects and maps induced by maps in the category \textbf{DGcoAlg}$_\mathrm{c}$. By the same arguments, the objects $G(n)$ are cofree and
$G = \lim_n G(n)$ is cofree as required.
\end{proof}

\subsection{Connected simplicial coalgebras}

This section is the simplicial counterpart of the previous one.
An object $V$ in \textbf{SVct} is connected if $V_0 = 0$. 
A connected simplicial coalgebra $C$ is an object in \textbf{ScoAlg} with $C_0=K$. We denote by \textbf{SVct}$_\mathrm{c}$ and \textbf{ScoAlg}$_\mathrm{c}$ the categories of connected simplicial vector spaces and coalgebras.

\begin{lem} \label{lem:split}
Let $C$ be a connected simplicial coalgebra. Then, the constant simplicial object $I(K)$ splits off the object $C$.
\end{lem}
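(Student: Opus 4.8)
The plan is to exhibit $I(K)$ as a retract of $C$ inside \textbf{ScoAlg}, with retraction the counit $\varepsilon_C \colon C \to I(K)$ and section a canonical map $\iota \colon I(K) \to C$ produced from the connectedness assumption. The only input needed is that, $C$ being connected, $C_0 = K$ as a coalgebra; hence $1 \in C_0$ is a grouplike element (indeed the unique grouplike element with $\varepsilon_{C_0}(1)=1$), and the construction of $\iota$ will involve no choices.

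First I would propagate $1$ through the simplicial structure. For each $n \geq 0$ write $\sigma_n \colon [n] \to [0]$ for the unique morphism of $\Delta$ and $\sigma_n^{\ast} \colon C_0 \to C_n$ for the corresponding structure map of $C$ (a composite of degeneracy operators), and set $e_n := \sigma_n^{\ast}(1) \in C_n$. Since $\sigma_n \circ \phi = \sigma_m$ for every $\phi \colon [m] \to [n]$ in $\Delta$, functoriality gives $\phi^{\ast}(e_n) = (\sigma_n\phi)^{\ast}(1) = \sigma_m^{\ast}(1) = e_m$, so the family $(e_n)_{n\geq 0}$ is compatible with all face and degeneracy operators and defines a morphism $\iota \colon I(K) \to C$ in \textbf{SVct} with $\iota_n(1) = e_n$. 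Next I would upgrade $\iota$ to a morphism of simplicial coalgebras: because $C$ is a comonoid in $(\mbox{\textbf{SVct}},\widehat{\otimes},I(K))$, each structure map $\sigma_n^{\ast}$ is a map of coalgebras, so each $e_n$ is grouplike in $C_n$ and $\iota$ is comonoidal. Finally, naturality of $\varepsilon_C$ together with $\varepsilon_{C_0} = \mathrm{id}_K$ gives $\varepsilon_{C_n}(e_n) = 1$ for all $n$, that is $\varepsilon_C \circ \iota = \mathrm{id}_{I(K)}$.

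This realizes $I(K)$ as a retract of $C$, so $I(K)$ splits off $C$; equivalently, the idempotent $\iota\varepsilon_C$ of $C$ splits and yields a direct sum decomposition $C \cong I(K) \oplus \overline{C}$ in \textbf{SVct}, where $\overline{C} = \ker(\varepsilon_C)$ is the sub-simplicial vector space with $\overline{C}_0 = 0$ (the coaugmentation coideal, generally not a subcoalgebra). I do not expect a genuine obstacle here: the one point worth flagging is that connectedness is essential — it is what makes the grouplike element of $C_0$ both exist and be unique, so that the family $(e_n)_{n}$ is forced and its coherence is automatic from functoriality in $\Delta$. (The same argument shows $\iota$ is the unique coalgebra morphism $I(K) \to C$, so that $I(K)$ is in fact a zero object of \textbf{ScoAlg}$_{\mathrm{c}}$, but this stronger statement is not needed here.)
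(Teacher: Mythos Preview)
Your proof is correct and, in fact, more self-contained than the paper's. The paper argues by forward reference: it invokes Proposition~\ref{prop:cocomplete} (proved later) to assert that $I(K)$ is both initial and terminal in \textbf{ScoAlg}$_{\mathrm{c}}$, so the canonical map $I(K)\to C$ exists and composes with the counit to the identity. You instead build the section $\iota$ by hand from the unique grouplike element in $C_0=K$ propagated by degeneracies, verify it is a coalgebra map, and check $\varepsilon_C\circ\iota=\mathrm{id}$ directly. The content is the same --- your explicit $\iota$ \emph{is} the initial map the paper invokes, as you note in your final parenthetical --- but your argument avoids the forward reference and makes transparent exactly where connectedness is used. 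The paper's version is terser; yours is logically cleaner in the order of exposition.
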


\begin{proof}
In \textbf{ScoAlg}$_\mathrm{c}$, $I(K)$ is both initial and terminal. Consequently, the canonical map
$i \colon I(K) \rightarrow C$ is an injection and $I(K)$ splits off $C$.
\end{proof}

The following definition is a consequence of the previous Lemma \ref{lem:split}.

\begin{defn} 
Let $C$ be a connected simplicial coalgebra. Then, there is a functor $I'_s \colon \textbf{ScoAlg}_\mathrm{c} \rightarrow \textbf{SVct}_\mathrm{c}$ defined by $I'_s(C)= C / I(K).$
\end{defn}

\begin{prop}
The functor $I'_s \colon \textbf{ScoAlg}_\mathrm{c} \rightarrow \textbf{SVct}_\mathrm{c}$ has a right adjoint defined by 
$$T'_s(W) = \bigoplus_{n \geq 0} W^{\widehat{\otimes}n} = I(K) \oplus W \oplus \cdots \oplus W^{\widehat{\otimes}n} \oplus \cdots $$ for any object $W$ in the category  \textbf{SVct}$_{\mathrm c}$.
\end{prop}

\begin{proof}
The required adjunction is obtained by considering the following bijections with $C \in$ \textbf{ScoAlg}$_\mathrm{c}$ and $W \in$ \textbf{SVct}$_\mathrm{c}$
\begin{eqnarray*}
\textbf{SVct}_\mathrm{c}(I'_s C, V) & \cong & \textbf{DGVct}_\mathrm{c}(NI'_sC, NW) \\
                                    & \cong &  \textbf{DGVct}_\mathrm{c}(I'_dNC, NW)\\
																		& \cong & \textbf{DGcoAlg}_\mathrm{c}(NC, T'_d(NW))\\
																		& \cong & \textbf{ScoAlg}_\mathrm{c}(C, RT'_d(NW)) \\
																		& \cong &  \textbf{ScoAlg}_\mathrm{c}(C, T'_s\Gamma (NW)) \\
																		& \cong & \textbf{ScoAlg}_\mathrm{c}(C, T'_s(W)).
\end{eqnarray*}
Note that the third bijection comes from considering the diagram
$$
\xymatrix{
NW & I'_dNC \ar[l] \ar@{.>}[ldd] & NC \ar[l] \\
NT'_s(W) \ar[u]^{N\pi} &  &  \\
T'_d(NW) \ar[u]^{\nabla} \ar[u]
}
$$
where $\nabla \colon T'_d(NW) \rightarrow NT'_s(W)$ is the normalized shuffle map and $\pi \colon T'_s(W) \rightarrow W$ the canonical projection.
\end{proof}

\begin{prop} \label{prop:cocomplete}
The category of connected simplicial coalgebras is complete and cocomplete.
\end{prop}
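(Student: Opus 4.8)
The plan is to imitate the proof of the previous proposition for \textbf{DGcoAlg}$_\mathrm{c}$, while exploiting the fact that, in contrast with the differential graded tensor product, the monoidal product $\widehat\otimes$ on \textbf{SVct} is formed levelwise. The first step is to observe that \textbf{ScoAlg} is simply the functor category $\textbf{coAlg}^{\Delta^{\mathrm{op}}}$: a comonoid for the levelwise product $\widehat\otimes$ is the same thing as a simplicial object whose value in each degree is a counital coassociative $K$-coalgebra and whose face and degeneracy operators are coalgebra maps. Now \textbf{coAlg} is complete and cocomplete: cocompleteness is elementary since colimits of coalgebras are computed as in vector spaces (see \cite{Swe69}), while completeness follows from the anti-equivalence between \textbf{coAlg} and the category of profinite $K$-algebras, the ungraded analogue of \cite[Proposition 1.7]{GG99} (equivalently, \textbf{coAlg} is locally presentable). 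Hence the functor category \textbf{ScoAlg} is itself complete and cocomplete, with limits and colimits computed degreewise. As in the differential graded setting, one should note that the levelwise product $\widehat\otimes$ is \emph{not} the categorical product of \textbf{ScoAlg}.

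For limits on the connected level I would show that \textbf{ScoAlg}$_\mathrm{c}$ is closed under limits in \textbf{ScoAlg}. The terminal object is $I(K)$ (Lemma \ref{lem:split}); and given any diagram $\{C^{(j)}\}$ in \textbf{ScoAlg}$_\mathrm{c}$, each transition map $C^{(j)} \to C^{(j')}$ restricts in degree $0$ to a coalgebra endomorphism of $K$, hence to the identity. Thus the degree-$0$ part of the diagram is the constant diagram on $K$, and since $K$ is terminal in \textbf{coAlg}, the degree-$0$ part of the limit is again $K$. So the limit is connected, and \textbf{ScoAlg}$_\mathrm{c}$ is complete.

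For colimits I would reproduce the degreewise constructions of the connected differential graded case and repair the degree-$0$ part by a quotient. The initial object is $I(K)$, which is simultaneously initial and terminal by Lemma \ref{lem:split}. For $f,g \colon C \to D$ in \textbf{ScoAlg}$_\mathrm{c}$, the coequalizer is $D/\mathrm{im}(f-g)$, built degreewise: a Sweedler-notation computation shows that $\mathrm{im}(f-g)$ is a simplicial coideal, each homogeneous piece is a coalgebra by \cite[Proposition 1.4.8]{Swe69}, and connectedness is automatic because $f_0=g_0=\mathrm{id}_K$. For the coproduct I would take, mimicking \textbf{DGcoAlg}$_\mathrm{c}$, $C\sqcup D = (C\oplus D)/\mathrm{im}\big(i_C\circ\varphi_C - i_D\circ\varphi_D\big)$, where $\varphi_C\colon I(K)\to C$ and $\varphi_D\colon I(K)\to D$ are the canonical maps and $C\oplus D$ is the degreewise direct sum, which is the coproduct in \textbf{ScoAlg}; the quotient identifies the two copies of $K$ in degree $0$, so $C\sqcup D$ is connected, and it has the correct universal property in \textbf{ScoAlg}$_\mathrm{c}$ since any coalgebra map out of $C\oplus D$ into a connected coalgebra already agrees with $\varphi$ on $I(K)$, $I(K)$ being initial there. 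Arbitrary small coproducts are obtained similarly, and coproducts together with coequalizers yield all small colimits.

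The step I expect to be the genuine obstacle is not the bookkeeping above but the input itself: the completeness of \textbf{coAlg}. Cocompleteness of coalgebras is formal, whereas the existence of limits really requires the anti-equivalence with profinite algebras (or local presentability of \textbf{coAlg}); this is the only place where a nontrivial external result is used. Everything after that — the passage to the functor category, the degree-$0$ analysis for limits, and the quotient constructions for colimits — is routine and mirrors \cite[Section 1]{GG99}, \cite{Swe69}, and the differential graded proof above. The single computation worth doing carefully is that $\mathrm{im}(f-g)$ and $\mathrm{im}\big(i_C\circ\varphi_C - i_D\circ\varphi_D\big)$ really are sub-coideals compatible with all the simplicial operators, which follows degreewise from the identity $\Delta_D\big(f(c)-g(c)\big)=\sum\big(f(c_1)-g(c_1)\big)\otimes f(c_2)+\sum g(c_1)\otimes\big(f(c_2)-g(c_2)\big)$ in Sweedler notation.
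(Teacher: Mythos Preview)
Your proposal is correct and follows essentially the same route as the paper: both arguments extend degreewise the completeness and cocompleteness of \textbf{coAlg} to \textbf{ScoAlg}, check that connectedness is preserved under limits by examining degree~$0$ (using that $K$ is terminal in \textbf{coAlg}), and build colimits in \textbf{ScoAlg}$_\mathrm{c}$ exactly as in the differential graded case via $D/\mathrm{im}(f-g)$ and $(C\oplus D)/\mathrm{im}(i_C\circ\varphi_C - i_D\circ\varphi_D)$. The only cosmetic difference is the source cited for completeness of \textbf{coAlg}: the paper invokes Agore \cite{Ago11}, whereas you invoke the anti-equivalence with profinite algebras or local presentability---both yield the same input.
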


\begin{proof}
 For limits in \textbf{ScoAlg}$_\mathrm{c}$ it suffices to extend degreewise the construction in \cite[Theorem 1.1]{Ago11} for the category of coalgebras over fields. Hence, a terminal object in \textbf{ScoAlg}$_\mathrm{c}$ is given by $I(K)$, the constant simplicial coalgebra.
 Notice that, since the field $K$ is a terminal object in \textbf{coAlg}, the product $K \sqcap K$ is isomorphic to $K$. This ensures that the products $C \sqcap D$ of objects in \textbf{ScoAlg}$_\mathrm{c}$ is again connected since $\left(C \sqcap D\right)_0 = C_0 \sqcap D_0 = K \sqcap K = K$.
 
Colimits in \textbf{ScoAlg}$_\mathrm{c}$ are formed in the same way as for \textbf{DGcoAlg}$_\mathrm{c}$. In this way, an initial object is given by $I(K)$. If $f,g \colon C \rightarrow D$ are two maps in \textbf{ScoAlg}$_\mathrm{c}$, their coequalizer is given by $D / \textrm{im}(f-g)$. Finally, if $C$ and $D$ are two objects in \textbf{ScoAlg}$_\mathrm{c}$, we may form the maps 
$$
I(K) \stackrel{\varphi_C}{\longrightarrow} C \stackrel{i_C}{\longrightarrow} C \oplus D \hspace{.2cm} \textrm{and} \hspace{.2cm} I(K) \stackrel{\varphi_D}{\longrightarrow} D \stackrel{i_D}{\longrightarrow} C \oplus D.
$$
The coproduct of $C$ and $D$ in \textbf{ScoAlg}$_\mathrm{c}$ is then given by 
$$C \sqcup D = C \oplus D / \textrm{im}\left(i_C \circ \varphi_C - i_D \circ \varphi_D \right).$$
Notice that the direct sum is taken degreewise and that the quotient guarantees the connectedness condition.
\end{proof}

With the above-mentioned facts, the category \textbf{ScoAlg}$_\mathrm{c}$ is endowed with a model category structure exactly as in \cite[Section 3]{Goe95}.

\subsection{A Quillen equivalence for connected coalgebras}
In this section, we improve the Quillen adjunction  $(\widetilde{N}, R)$ to a Quillen equivalence.
We were not able to check Hovey's criterion (see \cite[Corollary 1.3.16]{Hov99}) for arbitrary fibrant differential graded coalgebras. However, connectedness is a condition that guarantees such a criterion which yields a Quillen equivalence. 

\begin{lem}
There is an equivalence between the category of connected simplicial vector spaces and the category of connected differential graded vector spaces.
\end{lem}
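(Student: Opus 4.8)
The plan is to restrict the Dold--Kan equivalence $(N,\Gamma)$ between \textbf{SVct} and \textbf{DGVct} to the full subcategories of connected objects and check that it cuts down to an equivalence there. The key observation is that both the normalization functor $N$ and its inverse $\Gamma$ preserve the degree-zero part: for a simplicial vector space $V$, one has $(NV)_0 = V_0$, and dually for a differential graded vector space $W$, one has $(\Gamma W)_0 = W_0$. (These identities are immediate from the explicit formulas for $N$ and $\Gamma$ recalled in \cite[8.8.4]{Wei94}: the normalization in degree $0$ is the identity, and $\Gamma$ in degree $0$ is likewise the identity.) Hence $N$ sends $\textbf{SVct}_\mathrm{c}$ (objects with $V_0 = 0$) into $\textbf{DGVct}_\mathrm{c}$ (objects with $W_0 = 0$), and $\Gamma$ sends $\textbf{DGVct}_\mathrm{c}$ back into $\textbf{SVct}_\mathrm{c}$.

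\emph{First} I would make precise the two restricted functors: write $N_\mathrm{c} \colon \textbf{SVct}_\mathrm{c} \to \textbf{DGVct}_\mathrm{c}$ and $\Gamma_\mathrm{c} \colon \textbf{DGVct}_\mathrm{c} \to \textbf{SVct}_\mathrm{c}$ for the evident restrictions, which are well defined by the degree-zero identities above. \emph{Next} I would invoke the Dold--Kan correspondence on the ambient categories, which supplies natural isomorphisms $\eta \colon \mathrm{Id} \xrightarrow{\ \cong\ } \Gamma N$ and $\varepsilon \colon N\Gamma \xrightarrow{\ \cong\ } \mathrm{Id}$. Restricting these natural transformations along the inclusion functors $\textbf{SVct}_\mathrm{c} \hookrightarrow \textbf{SVct}$ and $\textbf{DGVct}_\mathrm{c} \hookrightarrow \textbf{DGVct}$ gives natural isomorphisms $\mathrm{Id}_{\textbf{SVct}_\mathrm{c}} \cong \Gamma_\mathrm{c} N_\mathrm{c}$ and $N_\mathrm{c}\Gamma_\mathrm{c} \cong \mathrm{Id}_{\textbf{DGVct}_\mathrm{c}}$; here the only thing to check is that the components of $\eta$ and $\varepsilon$ at connected objects again lie in the connected subcategories, which is automatic since an isomorphism preserves the degree-zero vector space and the subcategories are full. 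This exhibits $(N_\mathrm{c}, \Gamma_\mathrm{c})$ as an equivalence of categories, proving the lemma.

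\emph{There is essentially no hard part here} --- the statement is a routine consequence of the fact that $N$ and $\Gamma$ are degreewise-defined and the identity in degree $0$, so that connectedness ($V_0 = 0$) is preserved in both directions, together with the observation that a full subcategory closed under the two functors of an equivalence inherits that equivalence. The one point that deserves a sentence of care is the direction of the restriction for $\Gamma$: one must note that $\Gamma$ does not merely preserve homology but literally preserves the degree-zero space, so that $W_0 = 0$ forces $(\Gamma W)_0 = 0$; a reader who only remembered that $\Gamma$ reflects weak equivalences might worry unnecessarily. I would therefore state the two degree-zero identities explicitly, cite \cite{Wei94} for them, and let the rest follow formally.
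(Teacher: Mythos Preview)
Your proposal is correct and follows essentially the same approach as the paper: both arguments hinge on the observation that $(\Gamma W)_0 = W_0$ (and $(NV)_0 = V_0$), so that $N$ and $\Gamma$ restrict to the connected subcategories, and the Dold--Kan equivalence then descends. The paper phrases the conclusion via the ``full, faithful, and essentially surjective'' criterion (citing \cite{Par70}) rather than by restricting the unit and counit isomorphisms directly, but this is a cosmetic difference.
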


\begin{proof}
We notice that the restriction of the normalization functor $N \colon \textbf{SVct$_{\mathrm{c}}$} \rightarrow \textbf{DGVct$_{\mathrm{c}}$}$ is full and faithful since it is induced by the Dold-Kan equivalence. Moreover, if $V \in$ \textbf{DGVct$_{\mathrm{c}}$}, we may find $W \in$ \textbf{SVct$_{\mathrm{c}}$} so that $NW \cong V$. Since $\Gamma(V)_0 = V_0 = 0$, it follows that $\Gamma(V) \in$ \textbf{SVct$_{\mathrm{c}}$}. Setting $W = \Gamma(V)$ meets the required condition. Therefore, by \cite[Section 2.1, Proposition 3]{Par70}, we deduce that the restriction of the normalization functor $N$ induces an equivalence of categories between connected vector spaces with an inverse given by the restriction of $\Gamma$.
\end{proof}

The following result is dual to \cite[Part I, Proposition 4.5]{Qui69}.
\begin{lem} \label{lem:cof-isom}
Let $V$ be a differential graded vector space.
Then the following maps
$$H_\ast (\widetilde{N} T'_s \Gamma (V)) \longrightarrow H_\ast(T'_d (V)) \longrightarrow T'_d H_\ast (V)$$ 
of graded coalgebras are isomorphisms.
\end{lem}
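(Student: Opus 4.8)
The plan is to prove the statement weight by weight. We may assume $V$ is connected, since $T'_d(V)$ and $T'_s(\Gamma V)$ require this. Then $T'_d(V)=\bigoplus_{n\ge 0}V^{\otimes n}$ and $T'_s(\Gamma V)=\bigoplus_{n\ge 0}(\Gamma V)^{\widehat\otimes n}$, both with deconcatenation comultiplications; since the normalization functor is computed degreewise it commutes with direct sums, so the underlying complex of $\widetilde N T'_s\Gamma(V)$ is $\bigoplus_{n\ge 0}N\big((\Gamma V)^{\widehat\otimes n}\big)$, while $T'_d H_\ast(V)=\bigoplus_{n\ge 0}(H_\ast V)^{\otimes n}$ carries the zero differential. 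Each of the three graded coalgebras thus has a natural weight grading (the tensor length), and both maps in the statement respect it: the first is the effect on homology of the natural coalgebra map $\widetilde N T'_s\Gamma(V)\to T'_d(V)$ obtained by iterating the Alexander--Whitney maps $AW$ (which make $N$ comonoidal) and using $N\Gamma\cong\mathrm{Id}$, while the second is the graded-coalgebra map classified, via the adjunction between $I'_d$ and $T'_d$, by the weight-one projection $I'_d H_\ast\big(T'_d V\big)\to H_\ast V$. Hence it suffices to check that each map is an isomorphism in every weight $n$.

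On the weight-$n$ summand the first map is, up to the Dold--Kan identification $N\Gamma V\cong V$, the iterated normalized Alexander--Whitney map $N\big((\Gamma V)^{\widehat\otimes n}\big)\to(N\Gamma V)^{\otimes n}$; by the Eilenberg--Zilber theorem \cite{Wei94} the maps $AW$ and $\nabla$ are mutually inverse chain homotopy equivalences, so this is a quasi-isomorphism and the first map is an isomorphism in each weight. For the second map one observes that over the field $K$ the complex $V$ splits as $H_\ast(V)$ (with zero differential) plus an acyclic complex, so the projection $V\to H_\ast(V)$ is a quasi-isomorphism; the functor $T'_d$ preserves quasi-isomorphisms, being a direct sum of tensor powers each of which does so over a field by the K\"unneth theorem; and the resulting isomorphism $H_\ast\big(T'_d V\big)\cong H_\ast\big(T'_d H_\ast V\big)=T'_d H_\ast(V)$ is the map in the statement, which in weight $n$ is the K\"unneth isomorphism $H_\ast(V^{\otimes n})\cong(H_\ast V)^{\otimes n}$. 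Naturality of $AW$, $\nabla$, of the Dold--Kan equivalence and of the K\"unneth/cross product shows these weightwise isomorphisms are compatible with the deconcatenation comultiplications; since each map in the statement is a morphism of graded coalgebras by construction and a bijection on underlying graded vector spaces, it is therefore an isomorphism of graded coalgebras. (Equivalently, the composite $H_\ast(\widetilde N T'_s\Gamma V)\to T'_d H_\ast(V)$ is the graded-coalgebra isomorphism induced by Eilenberg--Zilber, Dold--Kan and K\"unneth, and the right-hand map is an isomorphism, so the left-hand map is one by two-out-of-three; this is the coalgebra dual of \cite[Part I, Proposition 4.5]{Qui69}.)

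The step I expect to be the main obstacle is the bookkeeping behind the first paragraph: verifying that the two globally defined maps restrict on each weight summand to the concrete iterated Alexander--Whitney and K\"unneth maps used above. Concretely this amounts to (i) expanding the comultiplication on $\widetilde N T'_s\Gamma(V)$ --- which by construction is $N(\Delta)$ followed by an Alexander--Whitney map --- into its iterated reduced form and recognizing its weight-$(1,\dots,1)$ component as the $n$-fold $AW$; (ii) using that a coalgebra map into a tensor coalgebra $T'_d(W)$ is determined by its weight-one component, so its weight-$n$ component is forced to be the iterated reduced comultiplication followed by the $n$-th tensor power of that component; and (iii) checking that on the weight-$n$ summand of $H_\ast(T'_d V)$ the iterated reduced comultiplication is the full deconcatenation $v_1\cdots v_n\mapsto v_1\otimes\cdots\otimes v_n$, whose realization on homology is precisely the K\"unneth isomorphism. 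The auxiliary facts that $\Gamma V$ and $H_\ast(T'_d V)$ are connected (so that the relevant universal properties apply) and that $N$ commutes with the infinite direct sum $\bigoplus_{n\ge 0}$ are routine.
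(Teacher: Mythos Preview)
Your argument is correct and follows essentially the same route as the paper: both invoke the universal properties of the tensor coalgebras to produce the maps and then appeal to the Eilenberg--Zilber and K\"unneth theorems (using that the tensor coalgebra coincides with the tensor algebra as a graded vector space) to conclude they are isomorphisms. Your weight-by-weight bookkeeping simply spells out what the paper's two-sentence proof leaves implicit.
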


\begin{proof}
These maps are obtained by using the universal properties of the respective tensor coalgebras.
Hence we deduce the isomorphisms by applying K\"unneth and Eilenberg-Zilber theorems.
\end{proof}

\begin{lem}
If $C$ is a cofree differential graded coalgebra, then the map $$\widetilde{N} R C \longrightarrow C$$ is a weak equivalence.
\end{lem}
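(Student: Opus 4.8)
The plan is to reduce everything to Lemma~\ref{lem:cof-isom}. A cofree connected differential graded coalgebra is, by definition, of the form $C = T'_d(V)$ with $V \in$ \textbf{DGVct}$_\mathrm{c}$, and since $\Gamma(V)_0 = V_0 = 0$ the simplicial vector space $\Gamma(V)$ is again connected. By the defining value of $R^{\mathrm{com}}$ on cofree objects (the connected analogue of the formula $R^{\mathrm{com}}S_d = S_s\Gamma$ used to construct $R^{\mathrm{com}}$), we have $R^{\mathrm{com}}(T'_d V) = T'_s(\Gamma V)$, so the map in question is a morphism of connected differential graded coalgebras $\lambda_{T'_d(V)} \colon \widetilde{N}T'_s(\Gamma V) \longrightarrow T'_d(V)$.

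Next I would identify $\lambda_{T'_d(V)}$ with the comparison map of Lemma~\ref{lem:cof-isom}. Tracing $\mathrm{id}_{T'_s(\Gamma V)}$ through the chain of bijections defining $R^{\mathrm{com}}$ on a cofree object --- the adjunction $(I'_s, T'_s)$, the Dold--Kan adjunction $(N, \Gamma)$ restricted to connected objects, the adjunction $(I'_d, T'_d)$, and the identity $N I'_s = I'_d\widetilde{N}$ on connected objects (valid because $I'_s$ is the quotient by $I(K)$, $I'_d$ the quotient by $K[0]$, $N(I(K)) = K[0]$, and $N$ is exact) --- shows that $\lambda_{T'_d(V)}$ is the unique coalgebra map whose image under $I'_d$ is the composite
$$ I'_d\widetilde{N}T'_s(\Gamma V) \;=\; N I'_s T'_s(\Gamma V) \longrightarrow N\Gamma(V) \stackrel{\cong}{\longrightarrow} V, $$
where the first arrow is $N$ applied to the counit of $(I'_s, T'_s)$ and the last is the Dold--Kan isomorphism. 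This is exactly the coalgebra map $\widetilde{N}T'_s(\Gamma V) \to T'_d(V)$ built from the universal property of $T'_d$ that appears in Lemma~\ref{lem:cof-isom}, since both maps are characterised by the same data at the level of $I'_d$.

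Once this identification is in place, the conclusion is immediate: Lemma~\ref{lem:cof-isom} asserts that $H_\ast(\widetilde{N}T'_s\Gamma V) \to H_\ast(T'_d V)$ (followed by the isomorphism $H_\ast(T'_d V) \stackrel{\cong}{\longrightarrow} T'_d H_\ast V$) is an isomorphism of graded coalgebras, hence $H_\ast(\lambda_{T'_d(V)})$ is an isomorphism and $\lambda_{T'_d(V)}$ is a weak equivalence in \textbf{DGcoAlg}$_\mathrm{c}$. The hard part will be the identification carried out in the middle step: it amounts to unwinding the construction of $R^{\mathrm{com}}$ and the naturality of the Dold--Kan (co)unit, and one must check that the identity $N I'_s = I'_d\widetilde{N}$ is compatible with comultiplications and counits and that the Dold--Kan counit $N\Gamma \to \mathrm{Id}$ restricts to an isomorphism on the connected subcategories --- the latter being the equivalence of connected categories recorded earlier in this section. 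With those compatibilities verified, Lemma~\ref{lem:cof-isom} carries all of the homological content.
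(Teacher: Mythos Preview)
Your proposal is correct and follows the same approach as the paper: write $C \cong T'_d(V)$, use $R^{\mathrm{com}}T'_d(V) = T'_s\Gamma(V)$, and invoke Lemma~\ref{lem:cof-isom}. The paper's own argument is terser---it records the chain of isomorphisms $H_\ast(\widetilde{N}R^{\mathrm{com}}C) \cong H_\ast(\widetilde{N}T'_s\Gamma V) \cong H_\ast(T'_d V) \cong H_\ast(C)$ without pausing to check that the composite is induced by the specific counit map $\lambda_{T'_d(V)}$---whereas your middle paragraph supplies exactly that identification by tracing through the adjunctions, which is a genuine (if routine) point the paper leaves implicit.
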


\begin{proof}
Since $C \cong T'_d(V)$ one has
$$H_\ast (\widetilde{N} R C) \cong H_\ast (\widetilde{N}T'_s \Gamma (V)) \cong H_\ast (T'_d (V)) \cong H_\ast (C)$$
and hence the required result.
\end{proof}

\begin{thm} \label{mainresult}
If $C$ is a fibrant connected
differential graded coalgebra, then the map $$\widetilde{N} R C \longrightarrow C$$ is a weak equivalence. Hence, there is a Quillen equivalence between the category of connected differential graded coalgebras and the category of connected simplicial coalgebras.
\end{thm}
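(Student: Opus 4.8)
The plan is to check the two conditions of the Hovey criterion recalled in Section~\ref{sect:hov-crit}, now in the connected setting: that $\widetilde N$ reflects weak equivalences between cofibrant objects, and that for every fibrant connected differential graded coalgebra $C$ the counit $\lambda_C \colon \widetilde N R^{\mathrm{com}} C \to C$ is a weak equivalence. The first condition is essentially automatic. Every connected simplicial coalgebra is cofibrant: by Lemma~\ref{lem:split} the canonical map $I(K) \to X$ out of the initial object is a levelwise inclusion. Moreover $\widetilde N$ reflects \emph{all} weak equivalences, because $U_d \widetilde N = N U_s$ and the weak equivalences in \textbf{ScoAlg}$_\mathrm{c}$ and in \textbf{DGcoAlg}$_\mathrm{c}$ are both detected by applying $H_\ast$ to the underlying normalized chain complexes; for a map $f$ in \textbf{ScoAlg}$_\mathrm{c}$ the underlying map of $\widetilde N f$ is $N U_s f$, whose homology is $\pi_\ast f$, so $f$ and $\widetilde N f$ are weak equivalences at the same time.

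The heart of the argument is the second condition, and the key input is Lemma~\ref{lem:retract}. Given a fibrant connected differential graded coalgebra $C$, factor its counit $\epsilon_C \colon C \to K[0]$ as an acyclic cofibration followed by a fibration $C \stackrel{i}{\longrightarrow} G \stackrel{p}{\longrightarrow} K[0]$. By Lemma~\ref{lem:retract}, since $C$ is fibrant there is a retraction $r \colon G \to C$ in \textbf{DGcoAlg}$_\mathrm{c}$ with $r \circ i = \mathrm{id}_C$, and $G$ is a cofree connected differential graded coalgebra. The transformation $\lambda$ is the counit of the adjunction $(\widetilde N, R^{\mathrm{com}})$ and is therefore natural; applying $\widetilde N R^{\mathrm{com}}$ to $C \stackrel{i}{\longrightarrow} G \stackrel{r}{\longrightarrow} C$ and pasting in the naturality squares for $\lambda$ exhibits $\lambda_C$ as a retract of $\lambda_G$ in the category of arrows, the two outer vertical composites being identities by $r \circ i = \mathrm{id}_C$ and functoriality of $\widetilde N R^{\mathrm{com}}$. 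By the lemma on cofree differential graded coalgebras proved just above, $\lambda_G$ is a weak equivalence, and since the weak equivalences of a model category are closed under retracts, $\lambda_C$ is a weak equivalence as well.

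With both conditions verified, the Hovey criterion gives that the Quillen pair $(\widetilde N, R^{\mathrm{com}})$ is a Quillen equivalence between \textbf{DGcoAlg}$_\mathrm{c}$ and \textbf{ScoAlg}$_\mathrm{c}$. The main obstacle is not the retract argument, which is formal, but rather everything packaged into Lemma~\ref{lem:retract}: the acyclic-cofibration/fibration factorization for connected differential graded coalgebras and the cofreeness of the object $G$ it produces. Both are available from earlier in this section, so here it only remains to be careful that $r$ is genuinely a morphism of coalgebras — it is, being a lift taken in \textbf{DGcoAlg}$_\mathrm{c}$ — and that the naturality squares for $\lambda$ paste together so that $\lambda_C$ is literally a retract of $\lambda_G$.
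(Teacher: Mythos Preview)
Your proof is correct and follows essentially the same route as the paper: use Lemma~\ref{lem:retract} to exhibit the fibrant $C$ as a retract of a cofree $G$, invoke the preceding lemma to see that $\lambda_G$ is a weak equivalence, and conclude by closure of weak equivalences under retracts (the paper phrases this last step via the homology diagram and \cite[Lemma~2.7]{DS95}, which is the same retract argument). Your explicit verification that $\widetilde N$ reflects all weak equivalences is a useful addition that the paper leaves implicit in its final citation of \cite[Corollary~1.3.16]{Hov99}.
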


\begin{proof}
Recall in Lemma \ref{lem:retract} that $C$ is a retract of a cofree coalgebra $G$, which may be written as $T'_d(V)$ for some $V \in$ \textbf{DGVct}$_\mathrm{c}$. 
Applying the functors $R^{\mathrm{com}}$, $\widetilde{N}$ and $H_\ast$ to the retract map
$C \rightarrow T'_d(V) \rightarrow C$
 we find the diagram 
$$
\xymatrix{
H_\ast (\widetilde{N} R C) \ar[rr] \ar[d] && H_\ast (\widetilde{N}T'_s \Gamma (V)) \ar[rr] \ar[d]^\cong && H_\ast (\widetilde{N} R C) \ar[d] \\
H_\ast C \ar[rr]        && H_\ast (T'_d (V)) \ar[rr]        && H_\ast C
}$$
of homology morphisms. Since by Lemma \ref{lem:cof-isom}, $H_\ast (\widetilde{N} T'_s \Gamma (V)) \cong H_\ast(T'_d (V))$, we deduce that $H_\ast (\widetilde{N} R C) \cong H_\ast C$ with help of \cite[Lemma 2.7]{DS95}.
Then, the Quillen equivalence follows from \cite[Corollary 1.3.16]{Hov99}.
\end{proof}

\section{Appendix on connected differential graded algebras} \label{section:con-alg}

In this appendix, we consider the category of connected differential graded algebras. 
 Its main interest here is that a particular dual of its limits is used to construct colimits for the category \textbf{DGcoAlg}$_\mathrm{c}$.

\begin{defn}
A \textit{connected} differential graded algebra $A$ is a differential graded algebra with $A_0 =K$. 
We denote by \textbf{DGAlg}$_\mathrm{c}$ the category of connected differential graded algebras. 
\end{defn}

\begin{defn}
Let $A$ be an object in the category \textbf{DGAlg}$_\mathrm{c}$. The isomorphism  $\mu_{\mid_{A_0}} \colon K \rightarrow A_0$ induces a map $\gamma_A \colon A \rightarrow K[0]$ and we define a functor $I_d \colon \textbf{DGAlg}_\mathrm{c} \rightarrow \textbf{DGVct}_\mathrm{c}$ by $I_d (A) = \ker \gamma_A.$
\end{defn}

\begin{lem}
The tensor algebra functor $T_d \colon \textbf{DGVct}_\mathrm{c} \rightarrow \textbf{DGAlg}_\mathrm{c}$ is left adjoint to the functor $I_d$.
\end{lem}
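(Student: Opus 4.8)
\emph{The plan.}
The aim is to produce a natural bijection
$$\textbf{DGAlg}_{\mathrm{c}}\bigl(T_d(V),A\bigr)\;\cong\;\textbf{DGVct}_{\mathrm{c}}\bigl(V,I_d(A)\bigr),\qquad V\in\textbf{DGVct}_{\mathrm{c}},\ A\in\textbf{DGAlg}_{\mathrm{c}},$$
directly out of the classical freeness of the tensor algebra. Note that this statement is also, in the sense gestured at in the appendix, dual to the tensor-coalgebra adjunction $(I'_d\dashv T'_d)$ of \cite[Section II.2]{HMS74} recalled earlier, so one could alternatively dualize that result; I will instead record the direct argument.

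First I would recall that $T_d(V)=\bigoplus_{n\ge 0}V^{\otimes n}$, equipped with the concatenation product and the tensor differential, is the free differential graded algebra on the underlying differential graded vector space $V$: for any differential graded algebra $A$, restriction along the inclusion $V=V^{\otimes 1}\hookrightarrow T_d(V)$ gives a natural bijection between unital differential graded algebra maps $T_d(V)\to A$ and chain maps $V\to A$. Because $V_0=0$ one gets $T_d(V)_0=(V^{\otimes 0})_0=K$, so $T_d(V)$ lies in \textbf{DGAlg}$_{\mathrm{c}}$, with augmentation $\gamma_{T_d(V)}$ the projection onto the summand $V^{\otimes 0}=K[0]$. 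Hence $T_d$ does restrict to a functor \textbf{DGVct}$_{\mathrm{c}}\to$\textbf{DGAlg}$_{\mathrm{c}}$, and the morphisms of \textbf{DGAlg}$_{\mathrm{c}}$ out of $T_d(V)$ are precisely the unital differential graded algebra maps.

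Second I would match the two Hom-sets. For $A\in$\textbf{DGAlg}$_{\mathrm{c}}$ the augmentation ideal $I_d(A)=\ker\gamma_A$ coincides with $A$ in every positive degree and vanishes in degree $0$; thus a chain map out of a connected $V$ automatically factors through $I_d(A)$ for degree reasons, and conversely any chain map $V\to I_d(A)$ gives such a map after composing with $I_d(A)\hookrightarrow A$. This identifies the chain maps $V\to A$ with the set $\textbf{DGVct}_{\mathrm{c}}(V,I_d(A))$. Composing with the freeness bijection of the previous step, and noting that both identifications are natural in $V$ and in $A$, yields the adjunction $T_d\dashv I_d$; one reads off that the unit $V\to I_d(T_dV)$ is the canonical inclusion and the counit $T_d(I_dA)\to A$ is the algebra map extending $I_d(A)\hookrightarrow A$.

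I do not anticipate a genuine obstacle here. The only points needing a word are that $T_d(V)$ is genuinely connected (which is where $V_0=0$ enters) and that chain maps out of a connected $V$ land in the augmentation ideal (a degree count); everything of substance is the freeness of the tensor algebra over the field $K$, applied degreewise in the differential graded setting.
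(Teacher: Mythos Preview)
The paper states this lemma without proof, treating it as a standard fact (it is the connected analogue, and dual, of the $(I'_d,T'_d)$ adjunction recalled from \cite{HMS74}). Your direct argument via the universal property of the tensor algebra is correct and is exactly the expected verification: the two points that need checking beyond ordinary freeness are that $V_0=0$ forces $T_d(V)_0=K$, so $T_d$ lands in \textbf{DGAlg}$_{\mathrm{c}}$, and that any chain map out of a connected $V$ into $A$ automatically factors through $I_d(A)=\ker\gamma_A$ for degree reasons; you handle both.
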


\begin{prop}
The category of connected differential graded algebras is complete and cocomplete.
\end{prop}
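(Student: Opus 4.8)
The plan is to treat limits and colimits separately, mirroring the structure already used for \textbf{DGcoAlg}$_\mathrm{c}$ and \textbf{ScoAlg}$_\mathrm{c}$, and exploiting the adjunction $(T_d, I_d)$ of the preceding lemma together with the anti-equivalence of \cite[Proposition 1.7]{GG99}. For \textbf{colimits} the situation is easy: the forgetful functor $\textbf{DGAlg}_\mathrm{c} \rightarrow \textbf{DGVct}_\mathrm{c}$ preserves nothing useful, but the \emph{free} functor $T_d$ is a left adjoint, so it preserves colimits; more concretely, the standard construction of colimits of algebras goes through degreewise. An initial object is $K[0]$ itself (the unit forces this, since $I_d$, being a right adjoint, preserves terminal objects, and dually $T_d$, being a left adjoint, preserves initial objects, so $T_d(0)=K[0]$ is initial). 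Coproducts $A \sqcup B$ are the usual free product of connected differential graded algebras, which is again connected because the degree-zero part stays $K$; coequalizers of $f,g\colon A\rightarrow B$ are $B/(\text{two-sided ideal generated by }\mathrm{im}(f-g))$, computed degreewise, and connectedness is preserved since $f-g$ vanishes in degree $0$. From coproducts and coequalizers one gets all small colimits.

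For \textbf{limits} the plan is to use the anti-equivalence between \textbf{DGcoAlg}$_\mathrm{c}$ and a category of (profinite / linearly compact) connected differential graded algebras supplied by \cite[Proposition 1.7]{GG99}, exactly as the cocompleteness of \textbf{DGcoAlg}$_\mathrm{c}$ was derived from the present statement in Proposition~\ref{prop:cocomplete}'s differential-graded analogue. A terminal object is $K[0]$ (it is the image under the anti-equivalence of the initial object $K[0]$ of \textbf{DGcoAlg}$_\mathrm{c}$). Products and equalizers are then obtained by dualizing the coproduct and coequalizer constructions in \textbf{DGcoAlg}$_\mathrm{c}$ via the anti-equivalence; equivalently, one may construct products directly as a suitable completed tensor/subalgebra construction and equalizers degreewise as the kernel subalgebra. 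Either way, one checks that connectedness (the condition $A_0=K$) is preserved at each step, which is automatic because $K \sqcap K \cong K$ on the coalgebra side corresponds to $K \sqcup K \cong K$-type behaviour in degree $0$ on the algebra side.

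The main obstacle I expect is the \emph{limit} side: showing that the relevant category of connected differential graded algebras in \cite[Proposition 1.7]{GG99} is genuinely closed under the limits one writes down, i.e. that the completed constructions land back among objects of the anti-equivalence (the profiniteness/linear-compactness condition must be checked), and that taking products does not destroy connectedness or associativity. This is precisely the dual subtlety to the warning already flagged in Proposition~\ref{prop:cocomplete} that ``$\otimes$ is not the categorical product'' — so the categorical product of connected differential graded algebras is \emph{not} simply a degreewise product of vector spaces with componentwise multiplication, and some care is needed in its description. All the rest (colimits, terminal and initial objects, verification of connectedness in degree zero) is routine and parallels the arguments given earlier for \textbf{DGcoAlg}$_\mathrm{c}$.
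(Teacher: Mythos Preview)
Your treatment of \emph{colimits} is essentially what the paper does: initial object $K[0]$, coequalizer of $f,g$ as $B/\langle f(a)-g(a)\rangle$, and coproduct as a free-product construction modified to stay connected (the paper makes this explicit by replacing $A,B$ by their augmentation ideals before forming the tensor algebra).

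Your plan for \emph{limits}, however, takes an unnecessary detour and contains a genuine gap. The anti-equivalence of \cite[Proposition~1.7]{GG99} is with \emph{profinite} differential graded algebras, so dualizing coproducts and coequalizers of connected coalgebras produces limits in the profinite category, not in \textbf{DGAlg}$_\mathrm{c}$. You correctly flag this as the main obstacle but never resolve it, and it is not a formality: the profinite product is in general a completed object, not an object of \textbf{DGAlg}$_\mathrm{c}$. Moreover, the paper's logical flow runs the other way --- the completeness established here is what is later fed into the anti-equivalence to obtain limits in \textbf{DGcoAlg}$_\mathrm{c}$ --- so appealing to that machinery at this point is at best awkward.

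The paper's actual argument for limits is entirely elementary and avoids all of this. The equalizer of $f,g\colon A\to B$ is simply the subalgebra $\ker(f-g)$. The product of $A$ and $B$ is the subalgebra
\[
A \sqcap B \;=\; \ker\bigl(\gamma_A\circ\pi_A - \gamma_B\circ\pi_B\bigr) \;\subset\; A\times B,
\]
i.e.\ the fibred product $A\times_{K[0]}B$ over the common augmentation, which is visibly connected. Your worry that the product must be some ``completed tensor/subalgebra construction'' is misplaced: the coalgebra remark you are dualizing (that $\otimes$ is not the categorical product) has no troublesome analogue here, since for associative algebras the categorical product \emph{is} built from the Cartesian product with componentwise multiplication; only the degree-zero part $K\times K$ needs to be cut down to $K$, and the fibred product does exactly that.
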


\begin{proof}
Constructions of limits are well-known in the category of differential graded algebras. Because of connectedness, some refinements have to be performed. A terminal object in \textbf{DGAlg}$_\mathrm{c}$ is given by $K[0]$. If $f,g \colon A \rightarrow B$ are two maps in \textbf{DGAlg}$_\mathrm{c}$, their equalizer is given by $\ker \left(f-g\right)$. Now let $A$ and $B$ be objects in \textbf{DGAlg}$_\mathrm{c}$. We may form the maps $$A \times B \stackrel{\pi_A}{\longrightarrow} A \stackrel{\gamma_A}{\longrightarrow} K[0] \hspace{.2cm} \textrm{and} \hspace{.2cm} A \times B \stackrel{\pi_B}{\longrightarrow} B \stackrel{\gamma_B}{\longrightarrow} K[0].$$ 
Then the product of $A$ and $B$ in \textbf{DGAlg}$_\mathrm{c}$ is given by $$A \sqcap B = \ker \left(\gamma_A \circ \pi_A  - \gamma_B \circ \pi_B\right).$$ 
For colimits, we first notice that an initial object in \textbf{DGAlg}$_\mathrm{c}$ is given by $K[0]$. Then the coequalizer of the two maps $f, g \colon A \rightarrow B$ is given by $$B / \left\langle f(a)-g(a), a \in A\right\rangle$$ where $\left\langle f(a)-g(a), a \in A\right\rangle$ denotes the ideal generated by $f(a)-g(a)$ for $a \in A$. 
 In \cite{Jar97}, the coproduct of two differential graded non-commutative algebras $A$ and $B$ is given by factoring out from the tensor algebra $T_d\left(A \otimes B\right)$ the ideal $\mathcal{I}$ which is generated by elements of the form
\begin{eqnarray*}
(a_1 \otimes b_1)\otimes(1\otimes b_2)- a_1\otimes b_1b_2, \\
(a_1 \otimes 1)\otimes(a_2\otimes b_2)- a_1a_2\otimes b_2. 
\end{eqnarray*}
However, the resulting object need not to be connected even if $A$ and $B$ are connected. To avoid this problem, we define the coproduct of two objects in \textbf{DGAlg}$_\mathrm{c}$ as $$A \sqcup B = T_d\left(\ker \gamma_A \otimes \ker \gamma_B \right) / \mathcal{I}.$$
Since $\ker \gamma_A$ and $\ker \gamma_A $ are connected differential graded vector spaces, they will not contribute to the degree zero part of the tensor algebra $T_d\left(\ker \gamma_A \otimes \ker \gamma_B \right)$.
In this way, we will have $\left(A \sqcup B\right)_0 =K$ and therefore $A \sqcup B \in$ \textbf{DGAlg}$_\mathrm{c}$.
\end{proof}

\end{document}